\documentclass[12pt]{amsart}
\usepackage[T1]{fontenc}
\usepackage[utf8]{inputenc}
\usepackage{lmodern}
\usepackage{xcolor}
\usepackage{mathrsfs,amssymb,amsmath,amsbsy,mathtools}
\usepackage{enumitem,needspace,etoolbox}
\usepackage[a4paper,textwidth=16cm,textheight=23cm,centering]{geometry}
\usepackage[expansion=false]{microtype}
\usepackage[colorlinks=true,linkcolor=blue,citecolor=blue,urlcolor=blue,
  bookmarksopen=true,bookmarksnumbered=true]{hyperref}
\allowdisplaybreaks[2]

\newtheorem{theorem}{Theorem}[section]
\newtheorem{lemma}[theorem]{Lemma}

\newtheorem{proposition}[theorem]{Proposition}
\theoremstyle{definition}

\theoremstyle{remark}
\newtheorem{remark}[theorem]{Remark}
\numberwithin{equation}{section}

\newcommand{\R}{\mathbb R}
\newcommand{\T}{\mathbb T}
\newcommand{\dd}{\,\mathrm d}
\newcommand{\Vq}{\widetilde V_q}
\newcommand{\Wulff}[1]{\left[#1\right]}

\makeatletter

\renewcommand{\@settitle}{%
  \begin{center}
    \normalfont\large\bfseries\boldmath\color{black}%
    \@title\par
  \end{center}%
}
\patchcmd{\@maketitle}{\global\topskip42\p@}
  {\global\topskip12\p@}
  {}{\ClassError{amsart}{Unable to adjust the title spacing}{}}
\patchcmd{\@maketitle}{\dimen@34\p@}
  {\dimen@18\p@}
  {}{\ClassError{amsart}{Unable to adjust the abstract spacing}{}}

\newcommand{\@titleaddresses}{%
  \par\nobreak\smallskip
  \begingroup
    \normalfont\footnotesize
    \def\author##1{}%
    \def\address##1##2{\par\noindent ##2\par}%
    \def\curraddr##1##2{}%
    \def\email##1##2{}%
    \def\urladdr##1##2{}%
    \addresses
  \endgroup
  \global\let\addresses\@empty
}
\patchcmd{\@setauthors}{\centering\footnotesize}
  {\centering\normalsize}
  {}{\ClassError{amsart}{Unable to adjust the author font}{}}
\patchcmd{\@setauthors}{\MakeUppercase{\authors}}
  {\authors}
  {}{\ClassError{amsart}{Unable to adjust the author name}{}}
\patchcmd{\@setauthors}{\endtrivlist}
  {\@titleaddresses\endtrivlist}
  {}{\ClassError{amsart}{Unable to move the affiliation}{}}

\renewcommand{\@adminfootnotes}{%
  \let\@makefnmark\relax
  \let\@thefnmark\relax
  \@footnotetext{%
    \hypersetup{urlcolor=black}%
    \urlstyle{same}%
    \begingroup
      \def\author##1{}%
      \def\address##1##2{}%
      \def\curraddr##1##2{}%
      \def\urladdr##1##2{}%
      \def\email##1##2{%
        Email:\enspace\href{mailto:##2}{\nolinkurl{##2}}.\par}%
      \addresses
    \endgroup
    \ifx\@empty\thankses\else\@setthanks\fi
  }%
}
\patchcmd{\footnoterule}{\@width 5pc}{\@width 3pc}{}{}

\renewcommand{\@setsubjclass}{%
  {\bfseries\subjclassname:}\enspace\@subjclass}
\renewcommand{\@setkeywords}{%
  {\bfseries Keywords:}\enspace\@keywords}
\renewenvironment{abstract}{%
  \global\setbox\abstractbox=\vtop\bgroup
    \normalfont\normalsize
    \list{}{%
      \labelwidth\z@
      \leftmargin1pc \rightmargin\leftmargin
      \listparindent\normalparindent \itemindent\z@
      \parsep\z@ \@plus\p@
      
    }%
    \item[]\textbf{Abstract}\enspace\ignorespaces
}{%
    \par\smallskip
    \ifx\@empty\@subjclass\else
      {\raggedright\noindent\@setsubjclass\par}\smallskip
    \fi
    \ifx\@empty\@keywords\else
      {\raggedright\noindent\@setkeywords\par}
    \fi
    \endlist\egroup
    \ifx\@setabstract\relax\@setabstracta\fi
}

\renewcommand{\@secnumfont}{\bfseries}
\renewcommand{\section}{\@startsection{section}{1}%
  \z@{.7\linespacing\@plus\linespacing}{.5\linespacing}%
  {\normalfont\bfseries\boldmath\color{black}\centering}}
\renewcommand{\subsection}{\@startsection{subsection}{2}%
  \z@{.5\linespacing\@plus.7\linespacing}{-.5em}%
  {\normalfont\bfseries\boldmath\color{black}}}
\renewcommand{\subsubsection}{\@startsection{subsubsection}{3}%
  \z@{.5\linespacing\@plus.7\linespacing}{-.5em}%
  {\normalfont\bfseries\boldmath\color{black}}}
\makeatother

\title[$L_p$ inequalities for planar dual quermassintegrals]
{On the $L_p$ Brunn--Minkowski inequality\\
for planar dual quermassintegrals}
\author{Jiangyan Tao}
\address{School of Mathematical Sciences, Zhejiang Normal University,
Zhejiang 321004, China}
\email{taojiangyan@zjnu.edu.cn}
\subjclass[2020]{52A40, 52A20, 26D15}
\keywords{$L_p$ addition, dual quermassintegral, Brunn--Minkowski inequality,
weighted Poincar\'e inequality}
\date{}

\begin{document}

\begin{abstract}
We determine the sharp $L_p$ Brunn--Minkowski range for planar dual
quermassintegrals on origin-symmetric convex bodies: $0<q\leq2$ for
$0\leq p<1$, and $0<q\leq4$ for $p=1$. The main new results are the
Minkowski inequality for $2<q<4$, proved using a weighted spectral
inequality on the projective circle, and parallelogram counterexamples
for $0<p<1$ and $q>2$. Together with the known logarithmic and endpoint
results, we obtain the complete equality classification: only positive
dilates occur, except at $(p,q)=(0,2)$, where parallelograms with
corresponding sides parallel also give equality.
\end{abstract}

\maketitle

\section{Introduction}\label{sec:intro}

The Brunn--Minkowski inequality expresses the concavity of the
$n$-th root of volume under Minkowski addition and is a basic principle
of convex geometry; see \cite{Schneider}. Two natural directions of
extension are to replace volume by another geometric functional and to
replace Minkowski addition by an interpolation of support functions.
Firey's $L_p$ addition \cite{Firey1962} and the logarithmic and $L_p$
inequalities of B\"or\"oczky, Lutwak, Yang, and Zhang \cite{BLYZ} provide
such interpolations. In the range $0\leq p<1$, the origin-symmetry
assumption plays a central role. The present paper concerns these
interpolations for dual quermassintegrals in the plane.

Dual quermassintegrals belong to the dual Brunn--Minkowski theory
initiated by Lutwak \cite{Lutwak1975}. They are defined through radial
functions rather than support functions. Their variational properties
are closely related to the dual curvature measures and Minkowski
problems introduced by Huang, Lutwak, Yang, and Zhang \cite{HLYZ};
see \cite{HYZ} for a survey. Concavity inequalities for these functionals
are therefore relevant both to geometric inequalities and to the
associated uniqueness questions. Here the interpolation is taken in
support-function coordinates, not by radial addition.

We write $\mathcal K_e^2$ for the origin-symmetric convex bodies in
$\R^2$, with nonempty interior, and denote their support and radial
functions by $h_K$ and $\rho_K$. For $q>0$, we use the normalization
\begin{equation}\label{eq:def-vq}
 \Vq(K)=\frac1q\int_{\mathbb S^1}\rho_K(u)^q\,\dd u
       =\int_K|x|^{q-2}\,\dd x.
\end{equation}
Thus $\widetilde V_2$ is area, while $\widetilde V_4$ is the polar moment
of inertia about the origin. For a positive continuous even function
$\varphi$ on $\mathbb S^1$, let
\[
 \Wulff{\varphi}
 =\{x\in\R^2:\langle x,u\rangle\leq\varphi(u)
       \text{ for every }u\in\mathbb S^1\}
\]
be its Wulff shape. For $K,L\in\mathcal K_e^2$ and $0\leq\lambda\leq1$,
define
\begin{equation}\label{eq:def-lp-sum}
 (1-\lambda)\mathbin{\cdot}K+_p\lambda\mathbin{\cdot}L
 =\Wulff{\bigl((1-\lambda)h_K^p+\lambda h_L^p\bigr)^{1/p}},
 \qquad 0<p\leq1,
\end{equation}
and, at $p=0$, define
\begin{equation}\label{eq:def-l0-sum}
 (1-\lambda)\mathbin{\cdot}K+_0\lambda\mathbin{\cdot}L
 =\Wulff{h_K^{1-\lambda}h_L^\lambda}.
\end{equation}
For $p=1$, this is the usual Minkowski combination $(1-\lambda)K+\lambda L$.

The problem is to determine the parameters for which
\begin{equation}\label{eq:main-bm}
 \Vq\bigl((1-\lambda)\mathbin{\cdot}K+_p
             \lambda\mathbin{\cdot}L\bigr)^{p/q}
 \geq (1-\lambda)\Vq(K)^{p/q}+\lambda\Vq(L)^{p/q},
 \qquad p>0,
\end{equation}
holds for all $K,L\in\mathcal K_e^2$. Its logarithmic form is
\begin{equation}\label{eq:main-l0-bm}
 \Vq\bigl((1-\lambda)\mathbin{\cdot}K+_0
             \lambda\mathbin{\cdot}L\bigr)
 \geq \Vq(K)^{1-\lambda}\Vq(L)^\lambda.
\end{equation}
The exponent $p/q$ is chosen so that positive dilates give equality. When $q=2$,
these are the planar $L_p$ and log-Brunn--Minkowski inequalities for area.

For Minkowski addition, Xi and Zhang \cite{XiZhang} established the
inequality for $0<q\leq1$ without symmetry assumptions, as part of their
$L_p$ theory in the range $p\geq q$. Sadovsky and Zhang
\cite{SadovskyZhang} subsequently proved the inequality for
origin-symmetric convex bodies in $\R^n$ throughout $0<q\leq n$.
For larger indices, Li, Wan, and Zhang \cite{LiWanZhang} obtained
counterexamples for $q>n+2$ and established the endpoint $q=n+2$,
including its equality condition, using Hadwiger's inequality for the
polar moment of inertia \cite{Hadwiger1956}. They conjectured validity,
with equality only for dilates, in the remaining interval $n<q<n+2$.
In dimension two, this leaves the range $2<q<4$ to be settled.

The logarithmic case has a different threshold. Xiong and Yang
\cite{XY} proved \eqref{eq:main-l0-bm} for $0<q<2$ in the plane, with
equality only for positive dilates, and constructed unconditional
rectangle counterexamples for $q>2$. At $q=2$, the theorem of
B\"or\"oczky, Lutwak, Yang, and Zhang \cite{BLYZ} applies; its equality
cases also include parallelograms with corresponding sides parallel.
For $0<p<1$, Chen, Tang, and Wang \cite{CTW} obtained near-ball results
for dual quermassintegrals and related uniqueness theorems, and studied
radially weighted extensions in \cite{CTWweighted}. Hu and Ivaki
\cite{HI} proved $L_p$ inequalities beyond the dimensional index in the
unconditional class, using weighted centro-affine Poincar\'e inequalities.
Li, Wan, and Zhang \cite{LiWanZhang} also established the Minkowski
inequality for unconditional bodies throughout $0<q\leq n+1$.
These results make it important to distinguish unconditional bodies
from general origin-symmetric bodies when seeking a sharp range.

Our main theorem gives the complete planar classification.

\begin{theorem}\label{thm:sharp-threshold}
Let $0\leq p\leq1$ and $q>0$. Then \eqref{eq:main-bm} when $p>0$,
and \eqref{eq:main-l0-bm} when $p=0$, hold for every
$K,L\in\mathcal K_e^2$ and every $0\leq\lambda\leq1$ if and only if
\begin{equation}\label{eq:sharp-region}
 \begin{cases}
  0<q\leq2,&0\leq p<1,\\
  0<q\leq4,&p=1.
 \end{cases}
\end{equation}
In this range, for $0<\lambda<1$, equality holds if and only if
$L=cK$ for some $c>0$, except when $(p,q)=(0,2)$, where equality also
holds for origin-symmetric parallelograms with corresponding sides
parallel. Outside this range, there are strict counterexamples at
$\lambda=1/2$.
\end{theorem}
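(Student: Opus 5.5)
The proof splits into three parts: the positive results in the region \eqref{eq:sharp-region}, the counterexamples outside it, and the equality cases. Most of the positive range can be assembled from known results.

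\textbf{Known positive and negative cases.}
\begin{itemize}
\item For $p=1$ and $0<q\le2$, the inequality and its equality case (dilates) are the theorem of Sadovsky and Zhang \cite{SadovskyZhang}. The endpoint $q=4$ is Li--Wan--Zhang \cite{LiWanZhang} via Hadwiger's inequality.
\item For $p=0$, the range $0<q<2$ is Xiong--Yang \cite{XY}, and $q=2$ is the planar log-Brunn--Minkowski inequality of \cite{BLYZ}, which carries the parallelogram equality cases.
\item For $0<p<1$ and $q\le2$, I would deduce \eqref{eq:main-bm} from the $p=0$ case. The $L_p$ mean dominates the geometric mean, so $(1-\lambda)K+_p\lambda L\supseteq(1-\lambda)K+_0\lambda L$. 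Then
\[
\Vq\bigl((1-\lambda)K+_p\lambda L\bigr)\ \ge\ \Vq(K)^{1-\lambda}\Vq(L)^\lambda .
\]
Next normalize $\Vq(K)=\Vq(L)=1$ using the $q$-homogeneity of $\Vq$ and the $p$-homogeneity of $h^p$; the standard reduction from \cite{BLYZ} then gives the $p/q$ form.
\item For $p=0$, $0<q\le 2$, equality forces equality in the log inequality, hence dilates, or parallelograms when $q=2$. For $p>0$, it also forces equality between the $L_p$ and geometric means, which forces $h_K/h_L$ to be constant and kills the parallelogram case.
\item For $p=1$ and $q>4$, the counterexamples are those of \cite{LiWanZhang}. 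For $p=0$ and $q>2$, they are the rectangles of \cite{XY}.
\end{itemize}

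\textbf{New step 1: $p=1$, $2<q<4$.} I would follow the Kolesnikov--Milman local-to-global scheme. First reduce to the infinitesimal statement: for smooth strictly convex even $K$ and even perturbations $h_t=h_K+t f$, the second variation of $\Vq^{1/q}$ is nonpositive. Writing $\rho=\rho_K$ on $\mathbb S^1$, this becomes a weighted Poincaré-type inequality for even functions. Even functions live on the projective circle $\mathbb S^1/\pm$, where the first nontrivial eigenfunctions, the linear ones, are excluded. That exclusion is what gives room beyond $q=2$, up to $q=4$.

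Parametrizing by angle, the inequality to prove takes the form
\[
\int \frac{\rho^{q}}{h}\,(f'^2-f^2)\,\dd\theta \ \ge\ (q-1)\frac{\bigl(\int \rho^{q}f/h\bigr)^2}{\int\rho^q}+(\text{lower-order terms}),
\]
up to the exact weights. I expect that pinning down these weights and the lower-order terms, and then proving the resulting weighted spectral gap on the projective circle for every convex $K$ and every $2<q<4$, is the main obstacle. As a first attempt I would use a Wirtinger/Hardy argument with a test function built from $h$ and $\rho$ themselves, exploiting the identity $\rho(\theta)h(\theta^\ast)\ge$ const coming from polarity, and handle the dilation direction $f=h$ separately.

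Strict negativity of the second variation, except along dilations, gives the equality case via the continuity/connectedness argument of \cite{BLYZ}, which passes from local to global.

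\textbf{New step 2: $0<p<1$, $q>2$.} I would take $K$ a square and $L$ its rotation by $\pi/4$, or more generally a parallelogram and a perturbation of it, and test at $\lambda=1/2$. By the reduction above it suffices to exhibit a strict counterexample to the $p$-sum inequality directly. As a cleaner alternative, take $L=K_\varepsilon$ a slight affine stretch of a square $K$ and expand in $\varepsilon$. At $q=2$ this is the equality direction of the $p=0$ case. For $q>2$ the second-order term in $\varepsilon$ has the wrong sign, and for $p<1$ the loss from the $L_p$ mean is of higher order in $\varepsilon$ than that gain, yielding strict failure at $\lambda=1/2$.
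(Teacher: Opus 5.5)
\textbf{Gap 1: the case $p=1$, $2<q<4$.} This is the main new case, and your treatment of it stops exactly where the proof has to happen. Reducing to nonpositivity of the second variation of $\Vq^{1/q}$ along Minkowski segments of $C^\infty_+$ bodies is fine. But you leave both the precise weighted inequality and its proof open. The heuristic that evenness removes the linear modes neither proves a gap for every convex $K$ nor explains why the gap persists exactly up to $q=4$.

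What is missing is a concrete spectral estimate. In the paper, the condition $\int_0^{2\pi}(f')^2h^2r^{q-2}\,\dd\theta\ge(q-1)\int_0^{2\pi}(f-\bar f)^2hRr^{q-2}\,\dd\theta$, with $R=h+h''$, $r^2=h^2+h'^2$, $f=\psi/h$, is handled in three steps. First, in the polar angle with $v=h_{K^\circ}$, it becomes $\int_{\T_\pi}g'^2v^{1-q}(v+v'')^{-1}\,\dd\phi\ge(q-1)\int_{\T_\pi}(g-\bar g)^2v^{-q}\,\dd\phi$. Second, the periodic primitive $P'=gv^{-q}$, the function $Y=Pv^{q-1}$, and Cauchy--Schwarz reduce this to $\lambda_1(v^{-\alpha})>\alpha+1$ with $\alpha=q-2$. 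Third, this is proved via $z=v^{-\alpha}Y'$, which solves $-(v^\alpha z')'=\lambda_1v^\alpha z$. Since $\T_\pi$ has length $\pi$, $z$ has a nodal interval of length at most $\pi/2$. On that interval $v$ dominates a solution of $s''+s=0$, and an explicit harmonic model gives $\lambda_D>\alpha+1$ precisely because $\alpha<2$.

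Your equality argument has a second gap. Strict negativity of the second variation for smooth bodies does not pass to arbitrary $K,L$ by a connectedness argument, because the strictness may degenerate under approximation. The paper needs four ingredients: a gap $\kappa(q,m_0,M_0)$ uniform under common inradius and circumradius bounds; the Green identity for the approximating $G_j$; weak convergence of surface area measures, giving $f=c$ on $\operatorname{supp}S_{K_{t_*}}$; and the equation $w''+w=0$ on the complementary arcs.

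\textbf{Gap 2: the counterexamples for $0<p<1$, $q>2$.} These fail near $q=2$. An axis-parallel stretch of a square gives unconditional rectangles, for which Hu--Ivaki \cite{HI} proved the inequality when $0<q-2\le 2p(1-p)/(2-p)$. Your order count is also wrong. Take $K=[-1,1]^2$, $L=D_\varepsilon K$ with $D_s=\mathrm{diag}(e^s,e^{-s})$. Then the $L_p$ midpoint is exactly $M_\varepsilon=\cosh(p\varepsilon/2)^{1/p}D_{\varepsilon/2}K$. Writing $\Vq(D_sK)=\Vq(K)\bigl(1+c_qs^2+O(s^4)\bigr)$, one finds
\[
\Vq(M_\varepsilon)^{p/q}-\tfrac12\Vq(K)^{p/q}-\tfrac12\Vq(L)^{p/q}
=\Vq(K)^{p/q}\,p\,\varepsilon^2\Bigl(\frac p8-\frac{c_q}{4q}\Bigr)+O(\varepsilon^4).
\]
Both effects are quadratic. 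Since $D_s$ preserves area, $c_q\to0$ as $q\downarrow2$, so there is no counterexample for $q$ near $2$. The square and its $\pi/4$-rotation fare no better: any fixed non-homothetic pair satisfies the inequality strictly at $(p,q)=(1,2)$, hence nearby, so the examples must degenerate.

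The missing idea is the paper's thin limit. Take $K_{t,\gamma}=(1+t)S_a+(1-t)S_b$ and $L_{t,\gamma}=(1-t)S_a+(1+t)S_b$, with generating directions at angle $\gamma\to0$; they are exchanged by a reflection. Their $L_p$ midpoint is exactly $c_p(t)P_\gamma$, and $\Vq(K_{t,\gamma})/\Vq(P_\gamma)\to1-t^q$ as $\gamma\to0$. So the thin bodies lose only $t^q=o(t^2)$ relative to $P_\gamma$, while the midpoint loses $\tfrac{q(1-p)}2t^2$, since $c_p(t)^q=1-\tfrac{q(1-p)}2t^2+O(t^4)$. Your square stretch is the nondegenerate case $\gamma=\pi/2$ of the same idea.

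\textbf{A smaller point.} For $0<p<1$ and $q=2$, equality of the Wulff shapes of the $L_p$ and geometric means does not give pointwise equality of the means. It gives equality only at the facet normals of the logarithmic combination. That suffices for parallelograms, but it must be argued. The paper instead shows that the $L_p$ combination contains $d_1I_1+d_2I_2$ with $d_1d_2>1$.
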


We prove the inequality and classify equality for $p=1$, $2<q<4$,
resolving the planar conjecture of Li, Wan, and Zhang
\cite{LiWanZhang}. We also construct counterexamples for every
$0<p<1$, $q>2$. Thus the upper threshold remains $2$ for every
$p<1$ and jumps to $4$ at $p=1$.

For $2<q<4$, the second variation of $\Vq^{1/q}$, expressed in
radial and polar coordinates, reduces concavity to a weighted
Poincar\'e inequality on the projective circle. In the spirit of
\cite{KM,HI}, we use the planar support-function condition to obtain
the required spectral estimate. A periodic primitive leads to the
weight $v^{-\alpha}$, where $\alpha=q-2\in(0,2)$ and $v$ is a positive
support function. Comparison with solutions of $s''+s=0$ on short
nodal intervals gives a strict gap, which compactness makes uniform
under common inradius and circumradius bounds. This gap preserves
equality information through smooth approximation: a Green-kernel
identity forces the variance to vanish, and weak continuity of
surface area measures, together with the support-function equation
on complementary arcs, yields positive dilates.

For $0<p<1$ and $0<q\leq2$, normalization and comparison of means
reduce the inequality to the logarithmic case. Sharpness follows
from parallelograms with nearly parallel generating directions,
whose $L_p$ midpoint is identified exactly. In the thin limit,
the moment loss has order $t^q$, while the midpoint scaling has
a quadratic loss, giving strict counterexamples for $q>2$.
These bodies are not unconditional, consistent with the larger
ranges under that symmetry. At $q=4$, we give a self-contained
planar moment-lifting proof of the known endpoint
\cite{Hadwiger1956,LiWanZhang}. The rectangle calculation in
\cite[Section~4.1]{LiWanZhang} gives failure for $q>4$.

Section~\ref{sec:prelim} collects the required facts about support
functions, Wulff shapes, and approximation. Sections~\ref{sec:p0}
and~\ref{sec:positive-p} treat $p=0$ and $0<p<1$, respectively.
Section~\ref{sec:p1} proves the spectral estimates and the Minkowski
inequality for $2<q<4$. Section~\ref{sec:endpoint} treats the endpoint
and sharpness, and completes the proof of
Theorem~\ref{thm:sharp-threshold}.

\vskip20pt
\section{Preliminaries}\label{sec:prelim}

We use the standard notation of convex geometry; see
\cite{Gardner,Schneider}. The Euclidean inner product and norm are
$\langle\cdot,\cdot\rangle$ and $|\cdot|$. We write $B^2$ for the closed
unit disk, $\mathbb S^1$ for its boundary, and $|K|$ for planar area.
A body is unconditional if it is invariant under reflection in each
coordinate axis. A pair of unconditional bodies is always understood
to use the same orthonormal coordinate system.

For a convex body $K$ containing the origin in its interior,
\[
 h_K(u)=\max_{x\in K}\langle x,u\rangle,
 \qquad
 \rho_K(u)=\max\{r\geq0:ru\in K\},
 \qquad u\in\mathbb S^1.
\]
Its polar body is
\[
 K^\circ=\{y\in\R^2:\langle x,y\rangle\leq1
                  \text{ for every }x\in K\},
\]
and the support and radial functions satisfy
\begin{equation}\label{eq:polar-support-radial}
 h_{K^\circ}(u)=\rho_K(u)^{-1}.
\end{equation}
For $a,b\geq0$, Minkowski addition satisfies
$h_{aK+bL}=ah_K+bh_L$. Moreover,
\[
 d_H(K,L)=\|h_K-h_L\|_\infty,
 \qquad
 K\subset L\quad\Longleftrightarrow\quad h_K\leq h_L.
\]
If $rB^2\subset K\subset RB^2$ for $0<r<R$, then both $h_K$ and
$\rho_K$ lie between $r$ and $R$. Such support functions have a common
Lipschitz bound, and the family is compact in the uniform topology.
Within this family, Hausdorff convergence also implies uniform
convergence of radial functions.

For positive continuous data, the Wulff construction in the
introduction is monotone, homogeneous, and continuous under uniform
convergence. In particular,
\[
 \varphi\leq\psi\ \Longrightarrow\ \Wulff{\varphi}\subset\Wulff{\psi},
 \qquad
 \Wulff{c\varphi}=c\Wulff{\varphi}\quad(c>0).
\]
The defining function need not be a support function: in general,
$h_{\Wulff{\varphi}}\leq\varphi$. This distinction is relevant to
\eqref{eq:def-lp-sum} for $p<1$. To see continuity, if
$\varphi_j\to\varphi>0$ uniformly, then for every sufficiently small
$\varepsilon>0$ and all large $j$,
\[
 (1-\varepsilon)\Wulff{\varphi}
 \subset\Wulff{\varphi_j}
 \subset(1+\varepsilon)\Wulff{\varphi}.
\]
Consequently, all the combinations in \eqref{eq:def-lp-sum} and
\eqref{eq:def-l0-sum} are continuous on $\mathcal K_e^2$.

The equality of the two expressions in \eqref{eq:def-vq} follows by
polar integration. The standard planar normalization
$\frac12\int_{\mathbb S^1}\rho_K^q\,\dd u$ equals
$\frac q2\Vq(K)$; this fixed factor does not change the inequalities
or their equality cases. For $q>0$, the density $|x|^{q-2}$ is locally
integrable, and
\begin{equation}\label{eq:vq-homogeneous}
 \Vq(cK)=c^q\Vq(K),\qquad c>0.
\end{equation}
The functional is invariant under orthogonal transformations and is
strictly increasing under proper inclusion of convex bodies with
nonempty interior. It is also continuous under Hausdorff convergence
in $\mathcal K_e^2$, by uniform convergence of radial functions.
These properties will be used without further mention.

We identify functions on $\mathbb S^1$ with their $2\pi$-periodic
angular representations. Origin symmetry makes support functions
$\pi$-periodic, so we also use the projective circle
$\T_\pi=\R/\pi\mathbb Z$. Integration on $\T_\pi$ means integration
over one period of length $\pi$, without normalization. The space
$H^1(\T_\pi)$ consists of square-integrable $\pi$-periodic functions
with a square-integrable weak derivative. Primes on angular functions denote derivatives in
the indicated angular variable.

A positive continuous $\pi$-periodic function $h$ is a support function
if and only if
\begin{equation}\label{eq:curvature-measure-support}
 h+h''\geq0
\end{equation}
in the sense of distributions. For $h=h_K$, the resulting nonnegative
measure is the planar surface area measure $S_K$, viewed in angular
coordinates. Thus, for every smooth periodic test function $\zeta$,
\[
 \int_{\mathbb S^1}\zeta\,\dd S_K
 =\int_0^{2\pi}h_K(\zeta+\zeta'')\,\dd\theta.
\]
In particular, $S_K(\mathbb S^1)=\int_0^{2\pi}h_K\,\dd\theta$, and
\begin{equation}\label{eq:surface-area-linear}
 S_{(1-t)K+tL}=(1-t)S_K+tS_L,\qquad 0\leq t\leq1.
\end{equation}
Uniform convergence of support functions therefore implies weak
convergence of their surface area measures. This is the measure
continuity used in the equality argument below.

A body is of class $C^\infty_+$ if its support function is smooth and
$h+h''>0$; in this case $\dd S_K=(h+h'')\,\dd\theta$.
For an arbitrary $K\in\mathcal K_e^2$, choose a nonnegative smooth even
approximate identity $\eta_\varepsilon$ on $\T_\pi$ and set
\begin{equation}\label{eq:smoothing}
 \begin{aligned}
 h_\varepsilon&=h_K*\eta_\varepsilon+\varepsilon,\\
 h_\varepsilon+h_\varepsilon''
 &=(h_K+h_K'')*\eta_\varepsilon+\varepsilon>0.
 \end{aligned}
\end{equation}
These functions define origin-symmetric $C^\infty_+$ bodies converging
to $K$ in the Hausdorff metric. Smoothing a fixed pair of bodies in
this way preserves common positive inradius and finite circumradius
bounds throughout their Minkowski segments.

\begin{remark}\label{rem:lower-dimensional}
The inequalities in Theorem~\ref{thm:sharp-threshold}, without the
equality classification, also hold for lower-dimensional
origin-symmetric compact convex sets in the stated parameter range.
For $0<\lambda<1$, apply the full-dimensional result to
$K+\varepsilon B^2$ and $L+\varepsilon B^2$. As $\varepsilon\downarrow0$,
the defining Wulff data decrease to those of the limiting combination,
and the corresponding compact Wulff sets decrease to that combination.
The weighted integrals converge by dominated convergence, since
$|x|^{q-2}$ is locally integrable. The cases $\lambda=0,1$ are identities.
\end{remark}

\vskip20pt
\section{The logarithmic case}\label{sec:p0}

We record the planar logarithmic theorem and its equality cases from
\cite{BLYZ,XY}, which provide the starting point for the range $p>0$.

\begin{theorem}\label{thm:p0-case}
	Let \(0<q\leq2\), \(K,L\in\mathcal K_e^2\), and \(0\leq\lambda\leq1\). Then
	\begin{equation}\label{eq:p0-log-bm}
		\Vq\bigl((1-\lambda)\mathbin{\cdot}K+_0
		\lambda\mathbin{\cdot}L\bigr)
		\geq \Vq(K)^{1-\lambda}\Vq(L)^\lambda.
	\end{equation}
	For \(0<\lambda<1\), equality holds if and only if
	\(K,L\) are positive dilates, or \(q=2\) and they are
	parallelograms with corresponding sides parallel.
	For every \(q>2\), the inequality fails at \(\lambda=1/2\)
	for some unconditional rectangles.
\end{theorem}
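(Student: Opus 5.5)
Theorem~\ref{thm:p0-case} is stated as a record of known results, so the plan is to assemble it from \cite{BLYZ} and \cite{XY}, checking that their hypotheses and normalizations match ours. There are three regimes: $q=2$, $0<q<2$, and the counterexamples for $q>2$.

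\textbf{The case $q=2$.} Here $\widetilde V_2$ is area. The combination $(1-\lambda)\cdot K+_0\lambda\cdot L$ is exactly the Wulff shape of $h_K^{1-\lambda}h_L^\lambda$ used in \cite{BLYZ}. The planar log-Brunn--Minkowski inequality of B\"or\"oczky, Lutwak, Yang, and Zhang therefore gives \eqref{eq:p0-log-bm} directly. It also gives the equality cases for $0<\lambda<1$: dilates, or parallelograms with parallel sides. The normalization factor $\frac q2$ discussed in Section~\ref{sec:prelim} is irrelevant, since both sides scale by the same constant when $1-\lambda+\lambda=1$.

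\textbf{The case $0<q<2$.} This is the planar theorem of Xiong and Yang \cite{XY}, including equality only for dilates. If one wants a self-contained route, I would reduce to the area case by a layer-cake argument. Write
\[
\Vq(K)=\int_K|x|^{q-2}\,\dd x=(2-q)\int_0^\infty s^{q-3}\,|K\setminus sB^2|\dd s,
\]
or use $\Vq(K)=\frac1q\int\rho_K^q$ together with a comparison of $\rho$ for the log combination. I do not expect this to reduce cleanly, since $|x|^{q-2}$ is decreasing and the log combination is not compatible with truncation by disks.

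So the intended proof simply cites \cite{XY}. The one step needing care is the equality statement: Xiong and Yang's equality characterization must be stated for origin-symmetric bodies in our class $\mathcal K_e^2$ and for the same $L_0$ combination. This is the main point to verify, namely that their $L_0$ sum is the Wulff shape of the geometric mean rather than some other convention.

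\textbf{Counterexamples for $q>2$.} I would take the unconditional rectangles $K=[-1,1]\times[-t,t]$ and $L=[-t,t]\times[-1,1]$ with $\lambda=\frac12$. For rectangles the log combination is computed coordinatewise: the Wulff shape of $h_K^{1/2}h_L^{1/2}$ contains the square $[-\sqrt t,\sqrt t]^2$. By the AM--GM-type bound on support functions in each direction, the combination equals that square; this is a short check on $\mathbb S^1$ using $|\cos|+t|\sin|$.

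By symmetry $\Vq(K)=\Vq(L)$, so it suffices to show $\Vq(\sqrt t\,[-1,1]^2)<\Vq(K)$ for small $t$. The left side is $c_q t^{q/2}$. The right side is at least $\int_{[-1,1]\times[-t,t]}|x|^{q-2}\,\dd x\approx 2t\int_{-1}^1|x_1|^{q-2}\,\dd x_1\sim t$. Since $q/2>1$ when $q>2$, the inequality fails strictly for small $t$. This reproduces \cite{XY}, and the computation is routine once the midpoint identification is done.
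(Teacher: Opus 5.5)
Your proposal is correct and follows the paper's proof. The case $q=2$ comes from \cite{BLYZ}. The case $0<q<2$ comes from \cite{XY}, where the factor $q/2$ cancels because the exponents $1-\lambda$ and $\lambda$ sum to one. For $q>2$ you use unconditional rectangles, verifying directly what the paper cites from \cite[Theorem~1.4]{XY}.

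Note which inclusion the counterexample actually needs: $\frac12\cdot K+_0\frac12\cdot L\subset\sqrt t\,[-1,1]^2$. This follows from evaluating $h_K^{1/2}h_L^{1/2}$ at the four coordinate directions, not from the AM--GM lower bound you emphasize. The estimate $\Vq(K)\geq 4t/(q-1)$ then follows cleanly from $|x|^{q-2}\geq|x_1|^{q-2}$.

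One small slip in your discarded layer-cake aside: for $q<2$ the correct formula is $\Vq(K)=(2-q)\int_0^\infty s^{q-3}|K\cap sB^2|\,\dd s$, with $K\cap sB^2$ rather than $K\setminus sB^2$.
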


\begin{proof}
The cases \(\lambda=0,1\) are identities, so assume
\(0<\lambda<1\).
Suppose first that \(0<q<2\).
The planar logarithmic Brunn--Minkowski inequality of
Xiong and Yang \cite[Theorem~1.3]{XY} gives
\eqref{eq:p0-log-bm}, with equality precisely for positive
dilates. Their normalization satisfies
\[
\widehat V_q(K)=\frac12\int_{\mathbb S^1}\rho_K^q
=\frac q2\Vq(K),
\]
and the fixed factor \(q/2\) cancels from both sides,
preserving both the inequality and its equality condition.

When \(q=2\), we have \(\Vq(A)=|A|\).
The inequality and its stated equality cases therefore
follow from the planar log-Brunn--Minkowski theorem of
B\"or\"oczky, Lutwak, Yang, and Zhang
\cite[Theorem~1.3]{BLYZ}.

Finally, for every \(q>2\), the construction in
\cite[Theorem~1.4 and Step~1 of its proof]{XY}
provides two unconditional rectangles for which the
midpoint inequality is strictly reversed.
The same normalization identity transfers this strict
inequality to \(\Vq\). Hence no \(q>2\) satisfies
\eqref{eq:p0-log-bm} for all origin-symmetric convex bodies.
\end{proof}

\vskip20pt
\section{\texorpdfstring{The case $0<p<1$}{The case 0 < p < 1}}\label{sec:positive-p}

We first establish the inequality in the range $0<q\leq2$; the argument also covers the case $p=1$.

\begin{proposition}\label{prop:positive-low}
	Let \(0<p\leq1\), \(0<q\leq2\), \(K,L\in\mathcal K_e^2\) and \(0\leq\lambda\leq1\). Then
	\[
	\Vq\bigl((1-\lambda)\mathbin{\cdot}K+_p
	\lambda\mathbin{\cdot}L\bigr)^{p/q}
	\geq
	(1-\lambda)\Vq(K)^{p/q}
	+\lambda\Vq(L)^{p/q}.
	\]
	If \(0<\lambda<1\), equality holds if and only if
	\(L=cK\) for some \(c>0\).
\end{proposition}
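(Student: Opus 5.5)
The plan is to reduce the statement to the logarithmic case, Theorem~\ref{thm:p0-case}, by normalization and the comparison of power means. The cases $\lambda=0,1$ are identities, so assume $0<\lambda<1$. Set $a=\Vq(K)^{1/q}$ and $b=\Vq(L)^{1/q}$, and put $\bar K=K/a$, $\bar L=L/b$, so that $\Vq(\bar K)=\Vq(\bar L)=1$ by \eqref{eq:vq-homogeneous}. Define the weight
\[
\mu=\frac{\lambda b^p}{(1-\lambda)a^p+\lambda b^p}\in(0,1),
\qquad
M=\bigl((1-\lambda)a^p+\lambda b^p\bigr)^{1/p}.
\]
A direct computation gives, pointwise on $\mathbb S^1$,
\[
(1-\lambda)h_K^p+\lambda h_L^p
=M^p\bigl((1-\mu)h_{\bar K}^p+\mu h_{\bar L}^p\bigr).
\]
Hence, by homogeneity of the Wulff shape,
$(1-\lambda)\cdot K+_p\lambda\cdot L=M\bigl((1-\mu)\cdot\bar K+_p\mu\cdot\bar L\bigr)$.

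Next compare means. Since $p>0$, the weighted power mean dominates the geometric mean:
\[
\bigl((1-\mu)h_{\bar K}^p+\mu h_{\bar L}^p\bigr)^{1/p}\geq h_{\bar K}^{1-\mu}h_{\bar L}^{\mu}.
\]
Monotonicity of Wulff shapes then gives
$(1-\mu)\cdot\bar K+_p\mu\cdot\bar L\supset(1-\mu)\cdot\bar K+_0\mu\cdot\bar L$.
By monotonicity of $\Vq$ and Theorem~\ref{thm:p0-case}, the volume of the right side is at least $\Vq(\bar K)^{1-\mu}\Vq(\bar L)^\mu=1$. Consequently
$\Vq\bigl((1-\lambda)\cdot K+_p\lambda\cdot L\bigr)\geq M^q$.
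Raising this to the power $p/q$ yields exactly $(1-\lambda)\Vq(K)^{p/q}+\lambda\Vq(L)^{p/q}$.

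For equality, all the inequalities must be equalities. In particular Theorem~\ref{thm:p0-case} must be sharp with $0<\mu<1$. For $q<2$, this forces $\bar L=c\bar K$, and $\Vq(\bar K)=\Vq(\bar L)=1$ then gives $c=1$, so $L=(b/a)K$. The main obstacle is the case $q=2$, where parallelograms with corresponding sides parallel are also equality cases of the log inequality. To exclude them, I would use strict monotonicity of $\Vq$ under proper inclusion. If $\bar K\neq\bar L$, then $h_{\bar K}\neq h_{\bar L}$ on an open set, and there the power-mean inequality is strict. I would then show that the $L_p$ Wulff shape strictly contains the $L_0$ one. For parallelograms this is explicit: the $L_0$ combination is again a parallelogram whose facet normals are the common normals $\pm u_1,\pm u_2$. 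At these normals $h_{\bar K}/h_{\bar L}\neq1$ unless the two parallelograms coincide, since both have area $1$ and parallel sides, so the pair of side ratios cannot both be $1$ unless $\bar K=\bar L$. So at some facet normal the $L_p$ datum is strictly larger, and the $L_p$ Wulff shape contains a point strictly beyond that facet. This gives proper inclusion and strict inequality in $\Vq$. Conversely, $L=cK$ clearly gives equality.
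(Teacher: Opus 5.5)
Your proof is correct, and through the case \(q<2\) it coincides with the paper's argument (your \(\mu,M\) are the paper's \(\theta,C\)). The two routes differ only in how parallelograms are excluded when \(q=2\).

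The paper writes \(\bar K=I_1+I_2\) and \(\bar L=c_1I_1+c_2I_2\) with \(c_1c_2=1\). Superadditivity of the weighted \(p\)-mean gives \((1-\mu)\cdot\bar K+_p\mu\cdot\bar L\supset d_1I_1+d_2I_2\), where \(d_i=((1-\mu)+\mu c_i^p)^{1/p}\). It then computes \((d_1d_2)^p=1+\mu(1-\mu)(c_1^p+c_1^{-p}-2)>1\). This contradicts equality by a direct area count, with no need to identify the \(L_0\) body or to invoke strict monotonicity of \(\Vq\).

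You instead identify \(G=(1-\mu)\cdot\bar K+_0\mu\cdot\bar L\), show that the \(L_p\) body properly contains it, and use strict monotonicity. Two steps you only assert need a line each.
\begin{enumerate}
\item \(G=c_1^\mu I_1+c_2^\mu I_2\): H\"older's inequality gives \(h_{c_1^\mu I_1+c_2^\mu I_2}\le h_{\bar K}^{1-\mu}h_{\bar L}^{\mu}\), with equality at the four facet normals.
\item A strictly larger datum \(\varphi_p=((1-\mu)h_{\bar K}^p+\mu h_{\bar L}^p)^{1/p}\) at a single facet normal \(u_1\) does not by itself push a Wulff shape past that facet. You also need \(\varphi_p\ge h_G\) everywhere, continuity of \(\varphi_p-h_G\) near \(u_1\), and the fact that the normal cone of \(G\) at a relative-interior point \(x_0\) of the facet is \(\{u_1\}\). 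Then \(x_0+\varepsilon u_1\) satisfies every constraint for small \(\varepsilon>0\).
\end{enumerate}
For the same reason, your sentence about strict inequality on an open set is only a heuristic: Wulff data can be inactive where they increase.

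Since \(c_i\neq1\), one has \(d_i>c_i^\mu\). So the paper's parallelogram \(d_1I_1+d_2I_2\) strictly contains \(G\) and is an explicit witness for your proper inclusion. The paper's version is shorter and quantitative. Yours isolates the mechanism: the \(L_p\) datum strictly exceeds the \(L_0\) datum at an active facet normal.
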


\begin{proof}
	The cases \(\lambda=0,1\) are immediate. Assume \(0<\lambda<1\), and set
	\[
	a=\Vq(K)^{1/q},\qquad b=\Vq(L)^{1/q},\qquad
	K_0=a^{-1}K,\qquad L_0=b^{-1}L.
	\]
	Then \(\Vq(K_0)=\Vq(L_0)=1\). Define \(C>0\) and \(\theta\in(0,1)\) by
	\[
	C^p=(1-\lambda)a^p+\lambda b^p,\qquad
	\theta=\frac{\lambda b^p}{C^p}.
	\]
	By the definition of \(L_p\) addition,
	\begin{equation}\label{eq:positive-normalization}
		(1-\lambda)\mathbin{\cdot}K+_p\lambda\mathbin{\cdot}L
		=C\bigl((1-\theta)\mathbin{\cdot}K_0+_p
		\theta\mathbin{\cdot}L_0\bigr).
	\end{equation}
	The weighted power-mean inequality gives
	\[
	(1-\theta)\mathbin{\cdot}K_0+_p\theta\mathbin{\cdot}L_0
	\supset G_\theta
	:=\Wulff{h_{K_0}^{1-\theta}h_{L_0}^{\theta}}.
	\]
	Hence, by monotonicity and Theorem~\ref{thm:p0-case},
	\[
	\Vq\bigl((1-\theta)\mathbin{\cdot}K_0+_p
	\theta\mathbin{\cdot}L_0\bigr)
	\geq\Vq(G_\theta)
	\geq\Vq(K_0)^{1-\theta}\Vq(L_0)^\theta=1.
	\]
	Together with \eqref{eq:positive-normalization} and homogeneity,
	this proves the desired inequality.

	Suppose equality holds. Then both inequalities above are
	equalities. If \(0<q<2\), Theorem~\ref{thm:p0-case} implies
	that \(K_0,L_0\) are positive dilates, and their normalization
	gives \(K_0=L_0\).

	Now let \(q=2\) and suppose \(K_0\neq L_0\).
	By the equality classification in \cite{BLYZ}, there are
	nonparallel origin-centered segments \(I_1,I_2\) and
	constants \(c_1,c_2>0\) such that
	\[
	K_0=I_1+I_2,\qquad L_0=c_1I_1+c_2I_2.
	\]
	Since \(|K_0|=|L_0|=1\), we have \(c_1c_2=1\).
	Set
	\[
	d_i=\bigl((1-\theta)+\theta c_i^p\bigr)^{1/p},
	\qquad i=1,2.
	\]
	For \(0<p\leq1\), the weighted \(p\)-mean is concave and
	positively homogeneous, hence superadditive. Thus, for every
	\(u\in\mathbb S^1\),
	\[
	\begin{aligned}
		\bigl((1-\theta)h_{K_0}(u)^p
		+\theta h_{L_0}(u)^p\bigr)^{1/p}
		\geq d_1h_{I_1}(u)+d_2h_{I_2}(u)
		=h_{d_1I_1+d_2I_2}(u).
	\end{aligned}
	\]
	It follows that
	\[
	(1-\theta)\mathbin{\cdot}K_0+_p\theta\mathbin{\cdot}L_0
	\supset d_1I_1+d_2I_2.
	\]
	Moreover, \(K_0\neq L_0\) implies \(c_1\neq1\), so
	\[
	(d_1d_2)^p
	=1+\theta(1-\theta)\bigl(c_1^p+c_1^{-p}-2\bigr)>1.
	\]
	Since \(\Vq\) is area when \(q=2\), we obtain
	\[
	\Vq\bigl((1-\theta)\mathbin{\cdot}K_0+_p
	\theta\mathbin{\cdot}L_0\bigr)
	\geq |d_1I_1+d_2I_2|
	=d_1d_2 |I_1+I_2|>1,
	\]
	contradicting equality. Therefore \(K_0=L_0\) also when \(q=2\).

	Since \(K_0=L_0\), the definitions of \(K_0\) and \(L_0\) give \(L=(b/a)K\).
	Conversely, positive dilates give equality directly by homogeneity.
\end{proof}

Next, we will construct a counterexample when \(q>2\). Fix \(0<p<1\) and \(q>2\). For \(\gamma\in(0,\pi)\), set
\[
a_\gamma=\bigl(\cos(\gamma/2),\sin(\gamma/2)\bigr),
\qquad
b_\gamma=\bigl(\cos(\gamma/2),-\sin(\gamma/2)\bigr),
\]
and let
\[
S_a=[-a_\gamma,a_\gamma],\qquad
S_b=[-b_\gamma,b_\gamma],\qquad
P_\gamma=S_a+S_b.
\]
For \(t\in(0,1)\), define
\[
\begin{aligned}
	K_{t,\gamma}=(1+t)S_a+(1-t)S_b, \qquad
	L_{t,\gamma}=(1-t)S_a+(1+t)S_b,
\end{aligned}
\]
and
\[
c_p(t)=
\big(\frac{(1+t)^p+(1-t)^p}{2}\big)^{1/p}.
\]

Reflection across the first coordinate axis interchanges
\(K_{t,\gamma}\) and \(L_{t,\gamma}\), so
\begin{equation}\label{eq:congruent-moments}
	\Vq(K_{t,\gamma})=\Vq(L_{t,\gamma}).
\end{equation}

\begin{theorem}\label{thm:counterexample-q-greater-than-two}
	For every \(t\in(0,1)\) and \(\gamma\in(0,\pi)\),
	\begin{equation}\label{eq:exact-midpoint}
		\frac12\mathbin{\cdot}K_{t,\gamma}
		+_p\frac12\mathbin{\cdot}L_{t,\gamma}
		=c_p(t)P_\gamma.
	\end{equation}
	For some such \(t,\gamma\),
	\begin{equation}\label{eq:strict-counterexample}
		\Vq\bigl(c_p(t)P_\gamma\bigr)
		<\Vq(K_{t,\gamma})
		=\Vq(L_{t,\gamma}).
	\end{equation}
	Thus, for every \(0<p<1\) and \(q>2\), the \(L_p\)
	Brunn--Minkowski inequality fails at the midpoint for some
	origin-symmetric parallelograms.
\end{theorem}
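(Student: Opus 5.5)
The proof has two parts: an exact identification of the midpoint, which is elementary, and an asymptotic comparison in a degenerate regime.

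\emph{Exact midpoint.} Write $x=h_{S_a}(u)=|\langle u,a_\gamma\rangle|$ and $y=h_{S_b}(u)=|\langle u,b_\gamma\rangle|$. Then $h_K=(1+t)x+(1-t)y$ and $h_L=(1-t)x+(1+t)y$, so the Wulff datum of the midpoint is
\[
\varphi=\Bigl(\tfrac12 h_K^p+\tfrac12 h_L^p\Bigr)^{1/p}.
\]
As in the proof of Proposition~\ref{prop:positive-low}, the weighted $p$-mean is concave and $1$-homogeneous, hence superadditive. Applied to the vectors $((1+t)x,(1-t)x)$ and $((1-t)y,(1+t)y)$, this gives $\varphi\geq c_p(t)(x+y)=c_p(t)h_{P_\gamma}$. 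By monotonicity of the Wulff construction, the midpoint contains $c_p(t)P_\gamma$. For the reverse inclusion, take $u\perp a_\gamma$, so $x=0$. Then $\varphi=c_p(t)\,y=c_p(t)h_{P_\gamma}(u)$, and similarly for $u\perp b_\gamma$. These four directions $\pm u$ are exactly the facet normals of the parallelogram $c_p(t)P_\gamma$, which is the intersection of the corresponding four half-planes. Hence the Wulff shape is contained in $c_p(t)P_\gamma$, which proves \eqref{eq:exact-midpoint}.

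\emph{Strict counterexample.} By \eqref{eq:vq-homogeneous} and \eqref{eq:congruent-moments}, it suffices to find $t,\gamma$ with
\[
\Vq(K_{t,\gamma})/\Vq(P_\gamma)>c_p(t)^q .
\]
Expansion gives $c_p(t)^q=1-\tfrac{q(1-p)}2t^2+O(t^4)$, a quadratic loss. I will pass to the thin limit $\gamma\downarrow0$. Apply the linear map $(x_1,x_2)\mapsto(x_1,x_2/\sin(\gamma/2))$ together with the obvious rescaling of $x_1$. Then $P_\gamma$ becomes the fixed rhombus $Q_0$ with vertices $\pm(2,0),\pm(0,2)$. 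The body $K_{t,\gamma}$ becomes $Q_t$ with vertices $\pm(2,2t),\pm(2t,2)$, since $(1+t)a\pm(1-t)b$ are $(2\cos,2t\sin)$ and $(2t\cos,2\sin)$. Because $|x|^{q-2}=|x_1|^{q-2}(1+O(\gamma^2))$ on these bodies, dominated convergence gives
\[
\frac{\Vq(K_{t,\gamma})}{\Vq(P_\gamma)}\longrightarrow
\frac{G(t)}{G(0)},\qquad G(t)=\int_{Q_t}|x_1|^{q-2}\dd x .
\]

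The remaining claim is that $G(0)-G(t)=O(t^{q})$, or at least $o(t^2)$, as $t\downarrow0$. Granting it, we fix a small $t$ with $G(t)/G(0)>c_p(t)^q$ and then a small $\gamma$ by continuity, which yields \eqref{eq:strict-counterexample}. To prove the claim I will compute $G$ by vertical fibres. For fixed $x_1$, the length of the fibre of $Q_t$ is an explicit piecewise linear function of $x_1$ and $t$. The break points of $Q_t$ are at $|x_1|=2t$ and $|x_1|=2$. On $2t\leq|x_1|\leq2$, the fibre lengths of $Q_t$ and $Q_0$ should agree, because the upper edge of $Q_t$ from $(2t,2)$ to $(2,2t)$ has slope $-1$, parallel to that of $Q_0$, and the two edges coincide on the line $x_1+x_2=2+2t$ once the top is accounted for. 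I will verify this carefully, possibly after a translation of the comparison region. The discrepancy is then confined to the strip $|x_1|\leq2t$, where the fibre lengths are bounded. Its weighted mass is at most $C\int_0^{2t}x_1^{q-2}\dd x_1=O(t^{q-1})$, and a finer cancellation, since the two fibre lengths differ there by $O(t)$, improves this to $O(t^{q})$.

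I expect this fibre bookkeeping to be the main obstacle, in particular confirming that no $t^2$-order discrepancy survives outside the strip $|x_1|\leq 2t$. If exact agreement of the fibres fails, I would fall back on a direct closed-form evaluation of $G(t)$ as a polynomial in $t$ plus a $t^{q}$ term. Its $t^2$ coefficient would then have to be compared with $q(1-p)/2$, and I would choose the normalization (or a slightly different thin limit) so that the leading loss is of order $t^q$, as the introduction indicates.
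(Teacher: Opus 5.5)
Your proposal is correct and is essentially the paper's proof: the same superadditivity-plus-four-facet argument identifies the midpoint as $c_p(t)P_\gamma$, and the same thin limit $\gamma\downarrow0$ compares a loss of order $t^q$ with the quadratic loss $\tfrac{q(1-p)}2t^2$ in $c_p(t)^q$. Your rescaled coordinate $x_1$ is just $x+y$ in the paper's coordinates $z=xa_\gamma+yb_\gamma$, where the limiting integrand is $|x+y|^{q-2}$ on the rectangle $[-(1+t),1+t]\times[-(1-t),1-t]$. The fibre bookkeeping you flag does close, so no fallback is needed: over $2t\leq|x_1|\leq2$ the fibre of $Q_t$ is that of $Q_0$ translated by $\pm2t$ in $x_2$, and over $|x_1|\leq2t$ the length deficit is $4t-2|x_1|$, which gives exactly $G(t)/G(0)=1-t^q$. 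One small correction: the relative error $1+O(\gamma^2)$ is not uniform near $x_1=0$, but the limit still follows by dominated convergence because the integrand is bounded.
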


\begin{proof}
	\textbf{Step 1.}
	The support functions satisfy
	\[
		h_{K_{t,\gamma}}=(1+t)h_{S_a}+(1-t)h_{S_b}, \quad \text{and} \quad
		h_{L_{t,\gamma}}=(1-t)h_{S_a}+(1+t)h_{S_b}.
	\]
	For \(0<p<1\), the \(p\)-mean is concave and positively
	homogeneous, hence superadditive. Therefore, pointwise on
	\(\mathbb S^1\),
	\[
	\big(
	\frac{h_{K_{t,\gamma}}^p+h_{L_{t,\gamma}}^p}{2}
	\big)^{1/p}
	\geq c_p(t)(h_{S_a}+h_{S_b})
	=h_{c_p(t)P_\gamma}.
	\]
	By the definition of the Wulff combination,
	\[
	c_p(t)P_\gamma
	\subseteq
	\frac12\mathbin{\cdot}K_{t,\gamma}
	+_p\frac12\mathbin{\cdot}L_{t,\gamma}.
	\]
	The preceding support-function inequality is an equality
	whenever \(u\perp a_\gamma\) or \(u\perp b_\gamma\), since
	one of the segment support functions then vanishes.
	In these four directions, the defining bounds of the Wulff
	combination coincide with the four supporting lines of
	\(c_p(t)P_\gamma\). Their half-spaces intersect in precisely
	this parallelogram, giving the reverse inclusion.
	This proves \eqref{eq:exact-midpoint}.

	\textbf{Step 2.}
	Recall that
	\begin{align*}
		\Vq(C)=\int_C |z|^{q-2}\,\dd z.
	\end{align*}
	Under the change of variables \(z=xa_\gamma+yb_\gamma\),
	the absolute Jacobian is \(\sin\gamma\), and
	\begin{align*}
		|xa_\gamma+yb_\gamma|^2
		=(x+y)^2\cos^2(\gamma/2)
		+(x-y)^2\sin^2(\gamma/2).
	\end{align*}
	Fix \(t\in(0,1)\). The corresponding coordinate domains for
	\(K_{t,\gamma}\) and \(P_\gamma\) are
	\[
	[-(1+t),1+t]\times[-(1-t),1-t]
	\quad\text{and}\quad [-1,1]^2.
	\]
	On both rectangles, \(|xa_\gamma+yb_\gamma|\leq2\).
	Since \(q>2\), dominated convergence, after cancellation of
	the common Jacobian, gives
	\begin{align*}
		\lim_{\gamma\to0}
		\frac{\Vq(K_{t,\gamma})}{\Vq(P_\gamma)}
		=
		\frac{
			\displaystyle
			\int_{-(1+t)}^{1+t}\int_{-(1-t)}^{1-t}
			|x+y|^{q-2}\,\dd y\,\dd x
		}{
			\displaystyle
			\int_{-1}^{1}\int_{-1}^{1}
			|x+y|^{q-2}\,\dd y\,\dd x
		}.
	\end{align*}
	Direct integration yields
	\begin{align*}
		\int_{-(1+t)}^{1+t}\int_{-(1-t)}^{1-t}
		|x+y|^{q-2}\,\dd y\,\dd x
		=
		\frac{2^{q+1}}{q(q-1)}(1-t^q),
		\qquad 0\leq t<1.
	\end{align*}
	Applying this formula at \(t\) and at \(0\), we obtain
	\begin{equation}\label{eq:simple-limit}
		\lim_{\gamma\to0}
		\frac{\Vq(K_{t,\gamma})}{\Vq(P_\gamma)}
		=1-t^q.
	\end{equation}

	On the other hand, Taylor expansion at \(t=0\) gives
	\begin{align*}
		c_p(t)^q
		=1-\frac{q(1-p)}2t^2+O(t^4).
	\end{align*}
	Since \(q>2\),
	\[
	(1-t^q)-c_p(t)^q
	=\frac{q(1-p)}2t^2+o(t^2)>0
	\]
	for all sufficiently small \(t>0\).
	Fix such a \(t\), so that
	\begin{align*}
		1-t^q>c_p(t)^q.
	\end{align*}
	By \eqref{eq:simple-limit}, we may then choose
	\(\gamma>0\) sufficiently small that
	\[
	\frac{\Vq(K_{t,\gamma})}{\Vq(P_\gamma)}
	>c_p(t)^q.
	\]
	Using \eqref{eq:exact-midpoint}, homogeneity, and
	\eqref{eq:congruent-moments}, we conclude that
	\[
	\begin{aligned}
		\Vq\bigl(
		\tfrac12\mathbin{\cdot}K_{t,\gamma}
		+_p\tfrac12\mathbin{\cdot}L_{t,\gamma}
		\bigr)
		=c_p(t)^q\Vq(P_\gamma)
		<\Vq(K_{t,\gamma})
		=\Vq(L_{t,\gamma}).
	\end{aligned}
	\]
	This proves \eqref{eq:strict-counterexample}.
	Since \(p/q>0\), it also gives the claimed failure of the
	midpoint \(L_p\) Brunn--Minkowski inequality.
\end{proof}

\begin{remark}\label{rem:not-unconditional}
For the chosen parameters, \(K_{t,\gamma}\) and \(L_{t,\gamma}\)
are non-unconditional parallelograms exchanged by a reflection,
while their \(L_p\) midpoint is a rhombus.
Smooth approximation of their support functions, preserving this
reflection relation, also yields \(C^\infty_+\) counterexamples:
the strict deficit persists by continuity of the \(L_p\)-Wulff
operation and \(\Vq\).

These examples are consistent with Hu--Ivaki
\cite[Theorem~1.4]{HI}, who proved the inequality for unconditional
bodies when \(0<p<1\), \(q=2+\alpha\), and
\[
0<\alpha\leq\frac{2p(1-p)}{2-p}.
\]
Their \(L_0\) result \cite[Theorem~1.3]{HI} covers \(0<q<2\),
and is likewise consistent with the unconditional counterexamples
for \(q>2\) in Theorem~\ref{thm:p0-case}.
\end{remark}

\vskip20pt
\section{\texorpdfstring{The Minkowski case: $2<q<4$}{The Minkowski case: 2 < q < 4}}\label{sec:p1}

Throughout this section, let \(2<q<4\). We first derive the local
inequality, then prove the required spectral estimates, and finally
establish the global inequality and its equality case.

Let \(K\in\mathcal K_e^2\) be of class \(C^\infty_+\), with support
function \(h=h_K\). In outer-normal coordinates, write
\[
R=h+h'',\quad r=\sqrt{h^2+(h')^2},\quad
x(\theta)=h(\theta)e_\theta+h'(\theta)e_\theta^\perp,
\]
where \(e_\theta=(\cos\theta,\sin\theta)\) and
\(e_\theta^\perp=(-\sin\theta,\cos\theta)\).
Then \(x'=Re_\theta^\perp\), \(|x|=r\), and the divergence theorem gives
\begin{equation}\label{eq:volume-support}
	\Vq(K)=\frac1q\int_0^{2\pi}hRr^{q-2}\,\dd\theta.
\end{equation}
We write \(\Vq(h)\) for the functional evaluated at the convex body with support
function \(h\). Primes in the calculations below denote angular derivatives,
with the angular variable specified when coordinates change.

To study the concavity of \(\Vq^{1/q}\) along Minkowski segments, we first derive the following variation formulas.

\begin{lemma}\label{lem:second-variation}
Let \(\psi\) be a smooth \(\pi\)-periodic function. Then
\[
\Big.\frac{\dd}{\dd t}\Vq(h+t\psi)\Big|_{t=0}
=\int_0^{2\pi}\psi Rr^{q-2}\,\dd\theta.
\]
In particular, for \(\psi=fh\),
\[
\Big.\frac{\dd}{\dd t}\Vq(h+t f h)\Big|_{t=0}
=\int_0^{2\pi}fhRr^{q-2}\,\dd\theta,
\]
and
\[
\Big.\frac{\dd^2}{\dd t^2}\Vq(h+t f h)\Big|_{t=0}
=(q-1)\int_0^{2\pi}f^2hRr^{q-2}\,\dd\theta
-\int_0^{2\pi}(f')^2h^2r^{q-2}\,\dd\theta.
\]
\end{lemma}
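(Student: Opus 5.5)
The plan is to prove the first-variation formula geometrically and then get both second-order statements by differentiating it once more and integrating by parts. The only identity needed throughout is
\[
 rr'=hh'+h'h''=h'R .
\]

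\textbf{First variation.} Since \(K\) is of class \(C^\infty_+\), for \(|t|\) small the function \(h_t=h+t\psi\) still satisfies \(h_t+h_t''>0\). So \(h_t\) is the support function of a \(C^\infty_+\) body \(K_t\), and everything below is smooth in \(t\).

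I would use the second expression in \eqref{eq:def-vq}, \(\Vq(K_t)=\int_{K_t}|x|^{q-2}\dd x\). The Hadamard (Reynolds transport) formula gives
\[
 \frac{\dd}{\dd t}\Vq(K_t)=\int_{\partial K_t}|x|^{q-2}\,\nu\,\dd s,
\]
where \(\nu\) is the outward normal velocity of the boundary. In outer-normal parametrization the boundary point is \(x_t(\theta)=h_te_\theta+h_t'e_\theta^\perp\). Its normal velocity is \(\langle\partial_tx_t,e_\theta\rangle=\psi\), and \(\dd s=R_t\dd\theta\) because \(x_t'=R_te_\theta^\perp\). This yields
\[
 \frac{\dd}{\dd t}\Vq(h_t)=\int_0^{2\pi}\psi R_tr_t^{q-2}\dd\theta,
\]
and setting \(t=0\) gives the first claim. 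The case \(\psi=fh\) is the same formula with \(\psi=fh\) substituted.

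Alternatively, one can differentiate \eqref{eq:volume-support} directly and use \(\int(\psi h''-h\psi'')\,r^{q-2}\)-type integrations by parts. The transport argument is cleaner, and it is valid for all small \(t\), which is what the second step needs.

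\textbf{Second variation.} I would differentiate the \(t\)-dependent formula above at \(t=0\), using \(\partial_tR_t=\psi+\psi''\) and \(\partial_t r_t^2=2(h\psi+h'\psi')\). This gives
\[
 \int\psi(\psi+\psi'')r^{q-2}+(q-2)\int\psi^2hRr^{q-4}+(q-2)\int\psi\psi'h'Rr^{q-4}.
\]
By the identity \(rr'=h'R\) we have \((q-2)h'Rr^{q-4}=(r^{q-2})'\). So the last integral equals \(-\int(\psi\psi')'r^{q-2}\), and it cancels the \(\psi\psi''\) term. The result is
\[
 \frac{\dd^2}{\dd t^2}\Vq(h+t\psi)\Big|_{t=0}
 =\int\bigl(\psi^2-(\psi')^2\bigr)r^{q-2}+(q-2)\int\psi^2hRr^{q-4}.
\]

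\textbf{Substituting \(\psi=fh\).} Expand \((\psi')^2=(f')^2h^2+2ff'hh'+f^2(h')^2\). The cross term is \(-\int(f^2)'hh'r^{q-2}\); integrating it by parts gives
\[
 \int f^2\bigl[((h')^2+hh'')r^{q-2}+(q-2)h(h')^2Rr^{q-4}\bigr].
\]
Now collect the \(f^2\) terms:
\begin{itemize}
 \item the \(r^{q-2}\) pieces combine to \(h^2+hh''=hR\);
 \item the \(r^{q-4}\) pieces combine to \((q-2)hR(h^2+(h')^2)r^{q-4}=(q-2)hRr^{q-2}\).
\end{itemize}
Together these give \((q-1)\int f^2hRr^{q-2}\), and the remaining term is \(-\int(f')^2h^2r^{q-2}\), which is the stated formula.

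\textbf{Expected obstacle.} The main difficulty is bookkeeping rather than any conceptual step. The crucial point is recognizing \((r^{q-2})'=(q-2)r^{q-4}h'R\), which is what makes both cancellations occur. All boundary terms in the integrations by parts vanish by periodicity.
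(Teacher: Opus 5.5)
Your proposal is correct. From the second variation onward it is the paper's argument. The paper also differentiates the first-variation formula once more. It integrates the $\psi\psi''r^{q-2}$ term by parts; you integrate the $\psi\psi'(r^{q-2})'$ term instead, which is the same identity read in the other direction. Both arrive at $\int(r^{q-2}+(q-2)hRr^{q-4})\psi^2-\int(\psi')^2r^{q-2}$, and both then substitute $\psi=fh$ using $\int 2ff'hh'r^{q-2}=-\int f^2(hh'r^{q-2})'$.

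The genuine difference is how you obtain the first variation. The paper differentiates $\Vq=\frac1q\int_0^{2\pi}hRr^{q-2}\,\dd\theta$ from \eqref{eq:volume-support} directly. This produces three terms, and it then needs a two-stage integration by parts of $\int h\psi''r^{q-2}$, together with $r^2=h^2+(h')^2$ and $R=h+h''$, to collapse them to $\int\psi Rr^{q-2}$. You instead apply the Hadamard--Reynolds transport formula to $\int_{K_t}|x|^{q-2}\,\dd x$, reading off the normal velocity $\psi$ and the arc length $R_t\,\dd\theta$ from the outer-normal parametrization.

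What your route buys is that it explains the answer as the boundary integral of the density against the normal velocity, with no bookkeeping. What the paper's route buys is that it is purely algebraic and self-contained from the support-function formula, and does not invoke a transport theorem. Invoking that theorem is legitimate here: $h_t+h_t''>0$ for small $|t|$ makes $\partial K_t$ a smooth family of curves, and the density $|x|^{q-2}$ is smooth near $\partial K_t$ because the origin stays interior.
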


\begin{proof}
For the variation $h_t=h+t\psi$, we have
\[
R_t=h_t+h_t''
=
R+t(\psi+\psi'').
\]
Here the two primes in $\psi''$ indicate the second derivative with
respect to the angular variable $\theta$.

Furthermore,
\[
r_t=(h_t^2+(h_t')^2)^{1/2}.
\]
Differentiating with respect to the variation parameter \(t\)
and evaluating at \(t=0\), we obtain
\[
\frac{\dd}{\dd t}r_t\Big|_{t=0}
=
\frac{h_t\psi+h_t'\psi'}{r_t}\Big|_{t=0}=\frac{h\psi+h'\psi'}{r}.
\]

Differentiating the integral formula for $\widetilde V_q$ with respect to
$t$ and then setting $t=0$, we get
\[
\begin{aligned}
\Big.\frac{\dd}{\dd t}\Vq(h+t\psi)\Big|_{t=0}
=
\frac1q\int_0^{2\pi}
\psi Rr^{q-2}
+h(\psi+\psi'')r^{q-2}
+(q-2)hRr^{q-4}(h\psi+h'\psi')
\dd\theta .
\end{aligned}
\]
Integrating the term involving $\psi''$ by parts, we obtain
\begin{align*}
\int_0^{2\pi}h\psi''r^{q-2}\,\dd\theta&=-\int_0^{2\pi}
(h'r^{q-2}+h(r^{q-2})')\psi'\,\dd\theta\\
&=-\int_0^{2\pi}
\left(
h'r^{q-2}
+
(q-2)hh'Rr^{q-4}
\right)\psi'\,\dd\theta\\
&=\int_0^{2\pi}
(h'r^{q-2})'\psi\,\dd\theta-\int_0^{2\pi}(q-2)hh'Rr^{q-4}
\psi'\,\dd\theta\\
&=\int_0^{2\pi}(h''r^{q-2}
+(q-2)(h')^2Rr^{q-4})\psi\,\dd\theta-\int_0^{2\pi}(q-2)hh'Rr^{q-4}
\psi'\,\dd\theta
\end{align*}

Using
$
r^2=h^2+(h')^2$, and 
$R=h+h'',
$
we obtain
\begin{align}\label{eq:first-variation}
\Big.\frac{\dd}{\dd t}\Vq(h+t\psi)\Big|_{t=0}
=
\int_0^{2\pi}
\psi Rr^{q-2}\,\dd\theta .
\end{align}
In particular, choosing
$
\psi=fh,
$
gives
\[
\Big.\frac{\dd}{\dd t}\Vq(h+t f h)\Big|_{t=0}
=
\int_0^{2\pi}
fhRr^{q-2}\,\dd\theta .
\]

Differentiating \eqref{eq:first-variation} once more and integrating
the term containing \(\psi''\) by parts gives
\[
\Big.\frac{\dd^2}{\dd t^2}\Vq(h+t f h)\Big|_{t=0}
=\int_0^{2\pi}(r^{q-2}+(q-2)hRr^{q-4})\psi^2\,\dd\theta
-\int_0^{2\pi}(\psi')^2r^{q-2}\,\dd\theta.
\]
Substitute \(\psi=fh\) and use
\[
\int_0^{2\pi}2ff'hh'r^{q-2}\,\dd\theta
=-\int_0^{2\pi}f^2(hh'r^{q-2})'\,\dd\theta.
\]
The coefficient of \(f^2\) then reduces to
\((q-1)hRr^{q-2}\), which proves the desired result.
\end{proof}

Define the weighted mean
\[
\bar f=
\frac{\int_0^{2\pi}fhRr^{q-2}\,\dd\theta}
{\int_0^{2\pi}hRr^{q-2}\,\dd\theta}.
\]
For \(h_t=h+tfh\), the chain rule and Lemma \ref{lem:second-variation} give
\begin{equation}\label{eq:p1-second-deficit}
	\begin{aligned}
		\frac{\dd^2}{\dd t^2}\Vq(h_t)^{1/q}\Big|_{t=0}
		={}&-\frac1q\Vq(K)^{1/q-1}
		\Big(
		\int_0^{2\pi}(f')^2h^2r^{q-2}\,\dd\theta\\
		&\qquad
		-(q-1)\int_0^{2\pi}(f-\bar f)^2hRr^{q-2}\,\dd\theta
		\Big).
	\end{aligned}
\end{equation}
Thus local concavity along Minkowski segments is equivalent to
\begin{equation}\label{eq:support-poincare}
	\int_0^{2\pi}(f')^2h^2r^{q-2}\,\dd\theta
	\geq
	(q-1)\int_0^{2\pi}(f-\bar f)^2hRr^{q-2}\,\dd\theta.
\end{equation}

We next rewrite \eqref{eq:support-poincare} in radial coordinates. Let \(\phi\) be the polar angle of \(x(\theta)\), and let
\(v=h_{K^\circ}\). Then
\begin{equation}\label{eq:polar-coordinate-identities}
	v(\phi)=\frac1{r(\theta)},\quad
	\frac{\dd\phi}{\dd\theta}=\frac{hR}{r^2},\quad
	v+v''=\frac{r^3}{h^3R}>0.
\end{equation}
Here derivatives of \(v\) are with respect to \(\phi\).
The second identity follows from
\(\dd\phi/\dd\theta=\det(x,x')/|x|^2\); differentiating
\(v=1/r\) gives \(v'=-h'/(hr)\) and then the third identity.
For \(g(\phi(\theta))=f(\theta)\), we obtain
\begin{equation}\label{eq:polar-integral-identities}
	v^{-q}\,\dd\phi=hRr^{q-2}\,\dd\theta,\quad
	\frac{g'^2v^{1-q}}{v+v''}\,\dd\phi
	=(f')^2h^2r^{q-2}\,\dd\theta.
\end{equation}
All functions are \(\pi\)-periodic. On the projective circle
\(\T_\pi=\mathbb R/\pi\mathbb Z\), put
\[
\bar g=\frac{\int_{\T_\pi}gv^{-q}\,\dd\phi}
{\int_{\T_\pi}v^{-q}\,\dd\phi}.
\]
The change of variables gives \(\bar g=\bar f\), and
\eqref{eq:support-poincare} is equivalent to
\begin{equation}\label{eq:polar-poincare}
	\int_{\T_\pi}\frac{g'^2v^{1-q}}{v+v''}\,\dd\phi
	\geq(q-1)\int_{\T_\pi}(g-\bar g)^2v^{-q}\,\dd\phi.
\end{equation}

We prove the required spectral estimates next, using
\(\alpha=q-2\in(0,2)\).
We begin with the harmonic-weight model; the notation below will also be used in the subsequent comparison argument.
For an interval \(J=[a,b]\) and a nonnegative weight \(w\), set
\[
 \|z\|_{H^1(w)}^2=\int_Jw(z^2+z'^2).
\]
Here \(H^1(w)\) denotes the completion of
\(C^\infty([a,b])\) in this weighted Sobolev norm.
Let \(\mathcal D_D(J;w)\) be the closure in this norm of the smooth functions satisfying \(z(a)=z(b)=0\), and let \(\mathcal D_{ND}(J;w)\) be the corresponding closure when only \(z(b)=0\) is imposed.
Define
\[
 \lambda_D(J;w)
 =\inf_{\substack{0\ne z\in\mathcal D_D(J;w)}}
  \frac{\int_Jwz'^2}{\int_Jwz^2},
\quad
 \lambda_{ND}(J;w)
 =\inf_{\substack{0\ne z\in\mathcal D_{ND}(J;w)}}
  \frac{\int_Jwz'^2}{\int_Jwz^2}.
\]
Thus \(ND\) means a natural boundary condition at the left endpoint and a Dirichlet condition at the right endpoint.
This definition also covers the degeneracy of \(w=\sin^\alpha t\) at \(t=0\).

\begin{lemma}\label{lem:harmonic-model}
	Let \(0<\alpha<2\).
	\begin{enumerate}[label=\textup{(\roman*)}]
		\item If \(I=[a,b]\) has length at most \(\pi/2\), and
		\(s\in C^2(I)\) satisfies
		\[
		s>0\quad\hbox{on }I,\quad s''+s=0\quad\hbox{on }(a,b),
		\]
		then
		\[
		\lambda_D(I;s^\alpha)>\alpha+1.
		\]
		\item If \(0\leq a_0<b_0\leq\pi/2\), then
		\[
		\lambda_{ND}([a_0,b_0];\sin^\alpha t)\geq\alpha+1.
		\]
		The inequality is strict when \(a_0>0\), and
		\(\lambda_{ND}([0,\pi/2];\sin^\alpha t)=\alpha+1\).
	\end{enumerate}
\end{lemma}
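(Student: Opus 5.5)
The plan is to compare with the explicit eigenfunction $\cos t$ and use a Picone-type ground-state identity. For the weight $w=\sin^\alpha t$, write $\mathcal L z=-(wz')'/w=-z''-\alpha\cot t\,z'$. A direct computation gives
\[
 \mathcal L\cos t=(\alpha+1)\cos t ,
\]
and, more generally, for every constant $c$,
\[
 \mathcal L\cos(t-c)=(\alpha+1)\cos(t-c)-\frac{\alpha\sin c}{\sin t}.
\]
So for $c\leq0$, the function $\varphi=\cos(t-c)$ is a positive supersolution, $\mathcal L\varphi\geq(\alpha+1)\varphi$, wherever $\sin t>0$ and $\cos(t-c)>0$. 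The basic tool is the following identity, obtained by writing $z=\varphi u$ with $\varphi>0$ and integrating by parts:
\[
 \int_J wz'^2=\int_J w\varphi^2u'^2+\int_J\frac{\mathcal L\varphi}{\varphi}\,wz^2
 +\Bigl[w\varphi\varphi'u^2\Bigr]_{\partial J}.
\]
I would prove it first for smooth $z$ in the relevant class and then pass to the closures $\mathcal D_D$ and $\mathcal D_{ND}$ by density in the weighted norm.

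\emph{Part (ii).} Take $\varphi=\cos t$ on $[a_0,b_0]\subset[0,\pi/2]$, so the second integral equals exactly $(\alpha+1)\int wz^2$. At the Dirichlet end $b_0$, the boundary term vanishes: if $b_0<\pi/2$, then $u(b_0)=0$; if $b_0=\pi/2$, the term equals $wz^2\varphi'/\varphi$, which tends to $0$ because $z$ vanishes to first order there (a one-dimensional Hardy estimate near $\pi/2$ handles the closure). At the natural end $a_0$, the contribution is
\[
 -\bigl(w\varphi\varphi'u^2\bigr)(a_0)=\sin^{\alpha}a_0\,\cos a_0\,\sin a_0\;u(a_0)^2\geq0 .
\]
This already gives $\lambda_{ND}\geq\alpha+1$. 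At $a_0=0$ the weight kills the boundary term, and $z=\cos t$ is admissible on $[0,\pi/2]$, which yields the equality $\lambda_{ND}([0,\pi/2];\sin^\alpha t)=\alpha+1$.

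For strictness when $a_0>0$, I note that the weight is then bounded above and below by positive constants. The problem is therefore a regular Sturm--Liouville problem, and the infimum is attained by compactness of $H^1\hookrightarrow L^2$. Equality in the identity would force $u'\equiv0$ and $u(a_0)=0$, hence $z\equiv0$, a contradiction.

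\emph{Part (i).} Write $s=A\sin(t-t_0)$ with $A>0$. Since $\lambda_D$ is unchanged by constant multiples of the weight and by translation, I may assume $s=\sin t$ and $I=[a,b]\subset(0,\pi)$ with $b-a\leq\pi/2$. The weight is again nondegenerate on $I$, so the infimum is attained, and it suffices to exhibit a positive supersolution on $I$ with constant at least $\alpha+1$ and equality excluded. If $b\leq\pi/2$, I use $\varphi=\cos t$ as above; Dirichlet data at both ends kill the boundary terms, and attainment plus $u(a)=0$ gives strictness. If $a\geq\pi/2$, I apply the symmetry $t\mapsto\pi-t$, which preserves the weight.

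The remaining case, where $I$ straddles $\pi/2$, is the main obstacle, since the family $\cos(t-c)$ with $c\leq0$ is not positive across all of $I$. Here I would first try a glued supersolution: use $\cos(t-c)$ on the left part and its reflection on the right part, so that at the junction the derivative jumps downward. Such a concave kink keeps $\varphi$ a weak supersolution, so the identity still applies. If no admissible gluing exists, the fallback is the ground-state transform $z=(\sin t)^{-\alpha/2}y$, which turns the quotient into
\[
 \int_I y'^2-\beta\int_I\cot^2t\,y^2-\frac{\alpha}{2}\int_I y^2,\qquad \beta=\frac{\alpha}{2}\Bigl(1-\frac{\alpha}{2}\Bigr)\in\Bigl(0,\tfrac14\Bigr].
\]
On a straddling interval, $\cot^2t$ is small near $\pi/2$ and controlled near the ends. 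I would then compare with the Dirichlet Laplacian, whose first eigenvalue on an interval of length at most $\pi/2$ is at least $4$, and absorb the $\cot^2$ term by a Hardy inequality.
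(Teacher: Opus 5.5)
Part (ii) of your proposal is correct. With $\varphi=\cos t$, your Picone identity is exactly the paper's ground-state identity. Keeping the boundary term $\sin^{\alpha+1}a_0\cos a_0\,u(a_0)^2$ is a clean substitute for the paper's extension-by-constants argument, and it gives strictness directly. The subcases $b\le\pi/2$ and $a\ge\pi/2$ of (i) also go through.

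None of this uses $\alpha<2$, and (i) is false at $\alpha=2$. There your own transform has $\beta=0$ and gives $\lambda_D(I;\sin^2t)=\pi^2/|I|^2-1$. This equals $3=\alpha+1$ for every $I\subset(0,\pi)$ of length $\pi/2$, and every such interval straddles $\pi/2$. So the straddling case is the whole content of (i), and it is exactly the case your proposal leaves open.

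Neither of your suggested routes closes that case.

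\emph{The gluing cannot work.} The condition $\mathcal L\cos(t-c)\ge(\alpha+1)\cos(t-c)$ on $(0,\pi)$ forces $\sin c\le0$. Then $\cos(\pi/2-c)=\sin c\le0$, so no member of your family, and hence no positive concatenation of members, is positive at the interior point $\pi/2$. Moreover, these pieces are supersolutions for every $\alpha>0$, so a successful gluing would contradict the $\alpha=2$ computation.

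\emph{The Hardy fallback is only a sketch.} No lossy splitting of $\int y'^2$ between the bound $\int y'^2\ge4\int y^2$ and a Hardy absorption can succeed, because the margin $\lambda_D-(\alpha+1)$ is not bounded below over straddling intervals. It vanishes as $\alpha\uparrow2$. For $1\le\alpha<2$ it also tends to $0$ on $[a,a+\pi/2]$ as $a\downarrow0$. The reason is that $t=0$ has zero capacity for the weight $\sin^\alpha t$, so cut-offs of $\cos t$ vanishing on $[0,a]$ are admissible, with Rayleigh quotients tending to $\alpha+1$. A sharp argument is required.

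What is missing is a positive solution of $\mathcal L\varphi=(\alpha+1)\varphi$ across $\pi/2$, that is, a zero-separation theorem for the critical equation. The paper obtains it in the normalization $s=\cos x$ from the explicit second solution $y_2=\alpha\sin x\,J_\alpha(x)-\cos^{1-\alpha}x$, where $J_\alpha(x)=\int_0^x\cos^{-\alpha}t\,dt$. The argument runs as follows.
\begin{enumerate}
\item The ratio $R=y_2/\sin x$ is odd and increasing.
\item Two zeros $-\xi<0<\eta$ of a solution satisfy $R(\xi)+R(\eta)=0$.
\item $R(x)+R(\pi/2-x)\le2R(\pi/4)<0$. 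Here $\alpha<2$ enters twice: in the monotonicity (through $(\cot x)^{2-\alpha}$), and in $2^{\alpha/2}>\alpha\int_0^{\pi/4}\cos^{-\alpha}t\,dt$.
\item Hence zeros are more than $\pi/2$ apart, and a Wronskian comparison with the first Dirichlet eigenfunction gives the strict bound.
\end{enumerate}
With that separation result in hand, your Picone identity would also finish the proof, using as $\varphi$ the solution vanishing at $a$. But the positivity of that solution on $(a,b]$ is precisely what has to be proved.
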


\begin{proof}
\textbf{Step 1.} We first prove \textup{(ii)}.
For a smooth \(z\) on \([0,\pi/2]\) with \(z(\pi/2)=0\),
direct differentiation gives, on \((0,\pi/2)\),
\begin{equation}\label{eq:harmonic-ground-state-pointwise}
	\sin^\alpha t\,(z'^2-(\alpha+1)z^2)
	=\sin^\alpha t\cos^2t
	\big(\big(\frac{z}{\cos t}\big)'\big)^2
	-(\sin^\alpha t\tan t\,z^2)'.
\end{equation}
The boundary term tends to zero at both endpoints. Indeed, as 
\[
\sin^\alpha t\tan t\,z(t)^2=O(t^{\alpha+1}), \quad t\downarrow0
\]
whereas \(z(t)=O(\pi/2-t)\) gives
\[
\sin^\alpha t\tan t\,z(t)^2=O(\pi/2-t), \quad t\uparrow\pi/2.
\]
Integrating \eqref{eq:harmonic-ground-state-pointwise} over
\([\varepsilon,\pi/2-\varepsilon]\) and letting
\(\varepsilon\downarrow0\), we obtain
\begin{equation}\label{eq:harmonic-ground-state-identity}
	\int_0^{\pi/2}\sin^\alpha t\,(z'^2-(\alpha+1)z^2)\,\dd t
	=\int_0^{\pi/2}\sin^\alpha t\cos^2t
	\big(\big(\frac{z}{\cos t}\big)'\big)^2\,\dd t
	\geq0.
\end{equation}
By density, the inequality extends to
\(\mathcal D_{ND}([0,\pi/2];\sin^\alpha t)\).
Equality is attained by \(z(t)=\cos t\), so
\begin{equation}\label{eq:harmonic-full-mixed}
	\lambda_{ND}([0,\pi/2];\sin^\alpha t)=\alpha+1.
\end{equation}

Now extend a smooth test function on \([a_0,b_0]\) constantly
over \([0,a_0]\) and by zero over \([b_0,\pi/2]\). The extension
belongs to the mixed form domain on \([0,\pi/2]\), since it is
piecewise smooth and continuous and can be approximated in
\(H^1\) by smooth functions vanishing at \(\pi/2\).
Its Dirichlet energy is unchanged, while its weighted squared
\(L^2\)-norm can only increase. Thus
\[
\lambda_{ND}([a_0,b_0];\sin^\alpha t)
\geq\lambda_{ND}([0,\pi/2];\sin^\alpha t)=\alpha+1,
\]
where density gives the assertion on the full form domain.
If \(a_0>0\), let \(z\) be a positive first mixed eigenfunction.
The Neumann condition \(z'(a_0)=0\) and uniqueness for the
initial-value problem imply \(z(a_0)>0\).
The same extension therefore preserves its Dirichlet energy
and strictly increases its weighted \(L^2\)-norm, giving
\[
\lambda_{ND}([a_0,b_0];\sin^\alpha t)>\alpha+1.
\]

\textbf{Step 2.} We next prove \textup{(i)}. A translation and cancellation of a positive constant factor
in the Rayleigh quotient reduce the problem to
\(s(x)=\cos x\), with \(I\subset(-\pi/2,\pi/2)\).
Set \(\lambda_0=\alpha+1\). A fundamental system for
\begin{equation}\label{eq:harmonic-critical-equation}
	-(\cos^\alpha x\,y')'=\lambda_0\cos^\alpha x\,y
\end{equation}
is
\[
y_1(x)=\sin x,\quad
y_2(x)=\alpha\sin x\,J_\alpha(x)-\cos^{1-\alpha}x,
\]
where $J_\alpha(x)=\int_0^x\cos^{-\alpha}t\,\dd t$.
Direct differentiation verifies the equation; linear
independence follows from \(y_1(0)=0\), \(y_1'(0)=1\), and
\(y_2(0)=-1\). For \(x\ne0\), put
\[
R(x)=\frac{y_2(x)}{y_1(x)}
=\alpha J_\alpha(x)-\frac{\cos^{1-\alpha}x}{\sin x}.
\]
Then
\begin{equation}\label{eq:Rprime}
	R'(x)=\frac{\cos^{-\alpha}x}{\sin^2x}>0,
	\quad R(-x)=-R(x).
\end{equation}
A nonzero multiple of \(y_1\) has only one zero in
\((-\pi/2,\pi/2)\). Any other solution is nonzero at the
origin, and its zeros satisfy \(R(x)=\text{constant}\).
By \eqref{eq:Rprime}, it has at most one zero on each side
of the origin. Thus, if it has two zeros, write them as
\(-\xi\) and \(\eta\), where \(\xi,\eta>0\); they satisfy
\[
R(\xi)+R(\eta)=0.
\]

For \(0<x<\pi/2\), set \(S(x)=R(x)+R(\pi/2-x)\).
Since
\[
\frac{R'(x)}{R'(\pi/2-x)}=(\cot x)^{2-\alpha},
\]
the assumption \(\alpha<2\) shows that \(S\) increases on
\((0,\pi/4)\) and decreases on \((\pi/4,\pi/2)\).
Moreover, integrating
\[
(2\sin t\cos^{1-\alpha}t)'
=\alpha\cos^{-\alpha}t
+(2-\alpha)\cos^{-\alpha}t\cos(2t)
\]
over \([0,\pi/4]\) gives
\begin{align*}
	2^{\alpha/2}-\alpha\int_0^{\pi/4}\cos^{-\alpha}t\,\dd t
	=(2-\alpha)\int_0^{\pi/4}\cos^{-\alpha}t\cos(2t)\,\dd t>0.
\end{align*}
Consequently,
\[
S(x)\leq S(\pi/4)=2R(\pi/4)<0.
\]
If \(\xi+\eta\leq\pi/2\), monotonicity of \(R\) would give
\[
0=R(\xi)+R(\eta)
\leq R(\xi)+R(\pi/2-\xi)=S(\xi)<0,
\]
a contradiction. Hence two distinct zeros of a nonzero
solution of \eqref{eq:harmonic-critical-equation} are
separated by more than \(\pi/2\).

Finally, let \(u>0\) on \((a,b)\) be a first Dirichlet
eigenfunction, and write \(\nu=\lambda_D(I;\cos^\alpha x)\).
Suppose \(\nu\leq\lambda_0\), and let \(y\) solve
\eqref{eq:harmonic-critical-equation} with
\[
y(a)=0,\qquad y'(a)=u'(a)>0.
\]
The zero separation just proved implies \(y>0\) on \((a,b]\).
The two differential equations yield
\[
\bigl(\cos^\alpha x\,(u'y-uy')\bigr)'
=(\lambda_0-\nu)\cos^\alpha x\,uy\geq0.
\]
Integrating and using \(u(a)=u(b)=y(a)=0\), we obtain
\[
\cos^\alpha b\,u'(b)y(b)
=(\lambda_0-\nu)\int_a^b\cos^\alpha x\,uy\,\dd x\geq0.
\]
This contradicts \(u'(b)<0\) and \(y(b)>0\).
Here the endpoint derivative signs follow from positivity
of \(u\) and uniqueness for the regular initial-value problem.
Therefore \(\nu>\lambda_0=\alpha+1\). 
\end{proof}

The following lemma extends the Dirichlet eigenvalue estimate from harmonic weights to general support-function weights.
\begin{lemma}
	\label{lem:dirichlet-support}
	Let \(0<\alpha<2\), let \(I=(a,b)\) be an interval of length at most
	\(\pi/2\), and let \(v\in C([a,b])\) be such that
	\[
	v>0 \quad \text{on } [a,b],
	\qquad
	v''+v\geq 0 \quad \text{on } I
	\]
	in the sense of distributions. Then
	\[
	\lambda_D(I;v^\alpha)>\alpha+1.
	\]
\end{lemma}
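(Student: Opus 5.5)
Since \(v''+v\geq0\) and the interval is short, we compare \(v\) with the solution \(s\) of \(s''+s=0\) that agrees with \(v\) at both endpoints. Because \(b-a\leq\pi/2<\pi\), the two-point boundary problem
\[
s''+s=0\ \text{on }(a,b),\qquad s(a)=v(a),\ s(b)=v(b),
\]
is uniquely solvable. Positivity of \(v(a),v(b)\) forces \(s>0\) on \([a,b]\): \(s\) is a positive combination of \(\sin(b-x)/\sin(b-a)\) and \(\sin(x-a)/\sin(b-a)\), both nonnegative on \([a,b]\).

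\textbf{Step 1: \(v\geq s\) on \([a,b]\).} Set \(w=v-s\). Then \(w\) is continuous, vanishes at \(a\) and \(b\), and satisfies \(w''+w\geq0\) distributionally. Since \(b-a<\pi\), the operator \(-(d^2/dx^2+1)\) with Dirichlet conditions on \(I\) has positive first eigenvalue \((\pi/(b-a))^2-1>0\). Its Green function is nonnegative, given by \(\sin(x-a)\sin(b-y)/\sin(b-a)\) for \(x<y\) and symmetrically otherwise. Writing \(-(w''+w)=-\mu\) with \(\mu\geq0\) a measure, we get \(w=-\int G\,\dd\mu\leq0\), i.e. \(v\leq s\). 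This is the wrong direction for a naive weight comparison, so the comparison must be used differently.

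\textbf{Step 2: reverse the comparison.} Since \(v\leq s\) pointwise is subharmonic-type information, replace the Dirichlet data choice. Pick instead any point \(c\in[a,b]\) and the solution \(s\) of \(s''+s=0\) with \(s(c)=v(c)\), \(s'(c)\) a subgradient of \(v\) at \(c\); then \(v\geq s\) locally on the whole interval. This holds because \(v+v''\geq0\) makes \(v\) "\(\cos\)-convex": the function \(v\) lies above its supporting sinusoid, the analogue of a support function lying above the support function of a point. In the rotated coordinates \(v(x)=\tilde h(x)\) for the support function of a planar convex set, and \(s(x)=\langle y,e_x\rangle\) for a boundary point \(y\). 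However, \(s\) may become nonpositive on \(I\), so a single such \(s\) gives no uniform weight bound. The main plan is therefore a Rayleigh-quotient comparison that does not need \(v\geq s\) pointwise. Write \(v^\alpha\) and apply a ground-state substitution analogous to \eqref{eq:harmonic-ground-state-pointwise}. For \(z\) vanishing at \(a,b\), consider
\[
\int_I v^\alpha\bigl(z'^2-(\alpha+1)z^2\bigr)
\]
and try \(z=\sigma\varphi\), with \(\sigma\) the first Dirichlet eigenfunction of the harmonic model for some positive sinusoid \(s\) from Lemma~\ref{lem:harmonic-model}(i).

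\textbf{Step 3: approximation and strictness.} Concretely, first treat smooth \(v\) with \(v+v''>0\) by mollification as in \eqref{eq:smoothing}, which preserves \(v>0\) and the hypothesis on slightly shrunken intervals. For smooth \(v\), let \(u>0\) be the first Dirichlet eigenfunction with eigenvalue \(\nu\), and suppose \(\nu\leq\alpha+1\). Run the Sturm comparison of Step 2 in the proof of Lemma~\ref{lem:harmonic-model}(i): it needs a solution \(y\) of \(-(v^\alpha y')'=(\alpha+1)v^\alpha y\) with \(y(a)=0\) that stays positive on \((a,b]\). For the harmonic weight this followed from zero separation exceeding \(\pi/2\). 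For general \(v\) I would prove that zero separation by a Prüfer-angle comparison. Setting \(y=v^{-\alpha/2}Y\) turns the equation into \(Y''+QY=0\) with
\[
Q=(\alpha+1)-\tfrac{\alpha}{2}\frac{v''}{v}-\tfrac{\alpha(\alpha-2)}{4}\Bigl(\frac{v'}{v}\Bigr)^2 .
\]
Using \(v''\geq-v\) bounds the first part by \(\alpha+1+\alpha/2\), and since \(\alpha<2\) the last term has the sign of \(-(\alpha(\alpha-2)/4)\geq0\), so this bound goes the wrong way. Hence the pure potential comparison fails, and the genuine work is a Riccati/Prüfer comparison against the sinusoidal case in which the \(v'/v\) term is handled jointly. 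I expect this zero-separation estimate for general support-function weights to be the main obstacle. Once it is in hand, the Wronskian argument gives \(\nu>\alpha+1\) for smooth \(v\). Strictness passes to continuous \(v\) by an extension/compactness argument, using continuity of \(\lambda_D\) under uniform convergence of weights bounded away from \(0\).
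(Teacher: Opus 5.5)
Your proposal has a genuine gap: it stops exactly where the lemma has to be proved. In Step~3 you reduce everything to showing that the solution $y$ of $-(v^\alpha y')'=(\alpha+1)v^\alpha y$ with $y(a)=0$, $y'(a)>0$ stays positive on $(a,b]$, and you leave this open. By the Sturm oscillation characterization of the first Dirichlet eigenvalue, this positivity holds if and only if $\lambda_D(I;v^\alpha)>\alpha+1$. Indeed, the first zero after $a$ of the solution at parameter $\lambda$ moves strictly left as $\lambda$ increases, and it reaches $b$ exactly at $\lambda=\lambda_D$. So the ``zero-separation estimate'' is not a reduction but a restatement of the lemma. Your own Liouville computation shows that pointwise control of $v''/v$ cannot supply it, because of the unbounded $(v'/v)^2$ term. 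The ground-state ansatz $z=\sigma\varphi$ in Step~2 is never carried out.

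Independently, the approximation in Step~3 cannot give strictness. From $\lambda_D(I;v_j^\alpha)>\alpha+1$ and convergence of $\lambda_D$ you only get $\lambda_D(I;v^\alpha)\geq\alpha+1$, unless the gap is uniform in $j$, and nothing in your argument provides that.

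The missing idea is to place the supporting sinusoid at the maximum point of the eigenfunction. You then compare Rayleigh quotients through the monotonicity of $s/v$, not through $v\geq s$.

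Let $z>0$ be the first Dirichlet eigenfunction with eigenvalue $\lambda$. Since $(v^\alpha z')'=-\lambda v^\alpha z<0$, $z$ has a unique maximum point $m$. Set $A_0=v^\alpha zz'$. Then $A_0'=v^\alpha(z'^2-\lambda z^2)$, $A_0\geq0$ on $[a,m]$, $A_0\leq0$ on $[m,b]$, and $A_0(a)=A_0(m)=A_0(b)=0$.

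Let $s$ solve $s''+s=0$ with $s(m)=v(m)$ and $s'(m)=d\in[v'_-(m),v'_+(m)]$. Put $W=sv'-s'v$ and $\mu=v''+v\geq0$. Then $\dd W=s\,\dd\mu$ and $W(m-)\leq0\leq W(m+)$. Hence, where $s>0$, the ratio $Q=(s/v)^\alpha$ is nondecreasing left of $m$ and nonincreasing right of $m$. Because $Q$ changes monotonicity exactly where $A_0$ changes sign, if $s>0$ on $[a,b]$ you get
\[
\int_a^b s^\alpha(z'^2-\lambda z^2)\,\dd x=-\int_a^b A_0\,\dd Q\leq0 .
\]
Therefore $\lambda\geq\lambda_D(I;s^\alpha)>\alpha+1$ by Lemma~\ref{lem:harmonic-model}\,(i).

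Your worry that $s$ may become nonpositive is handled by the same choice of $m$. Since $b-a\leq\pi/2$, $s$ has at most one zero $c$ in $[a,b]$, and $c\neq m$. Then $s>0$ on $[m,b]$ if $c<m$, or on $[a,m]$ if $c>m$. On that half, $z'(m)=0$ acts as a natural boundary condition and the same integration by parts applies. Lemma~\ref{lem:harmonic-model}\,(ii) then gives $\lambda>\alpha+1$, strictly, because the mixed interval has left endpoint $|m-c|>0$.

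This argument works directly with continuous $v$ and the measure $\mu$. No smoothing is needed, and no limiting argument for strictness.
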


\begin{proof}

	\textbf{Step 1.} 
	Set
	$
	\lambda=\lambda_D(I;v^\alpha),
	$
	and let \(z\) be a corresponding first Dirichlet eigenfunction,
	normalized so that
	\begin{equation}\label{eq:strict-z-eigen}
		-(v^\alpha z')'=\lambda v^\alpha z,
		\quad
		z>0\quad\hbox{on }(a,b),
		\quad
		z(a)=z(b)=0.
	\end{equation}
	Since \(v\) is continuous and strictly positive, one has
	\(z\in C^1([a,b])\) and \(v^\alpha z'\in C^1([a,b])\).
	Moreover,
	\[
	(v^\alpha z')'=-\lambda v^\alpha z<0
	\quad\hbox{on }(a,b).
	\]
	Thus \(v^\alpha z'\) is strictly decreasing. Consequently, \(z\)
	has a unique maximum point \(m\in(a,b)\), and
	\begin{align*}
		z'>0\quad\hbox{on }(a,m),
		\quad
		z'(m)=0,
		\quad
		z'<0\quad\hbox{on }(m,b).
	\end{align*}
	Let
	$
	\mu=v''+v.
	$
	The distribution \(\mu\) is a nonnegative Radon measure. In
	particular, \(v'\) has one-sided limits at \(m\), and
	$
	v'_+(m)-v'_-(m)=\mu(\{m\})\geq0.
	$
	Choose
	\[
	d\in[v'_-(m),v'_+(m)]
	\]
	and define
	$
	s(x)=v(m)\cos(x-m)+d\sin(x-m).
	$
	Then
	\[
	s''+s=0,
	\quad
	s(m)=v(m),
	\quad
	s'(m)=d.
	\]
	The measure-valued variation-of-constants formula gives, for
	\(x\geq m\),
	\begin{align*}
		v(x)-s(x)
		=
		\bigl(v'_+(m)-d\bigr)\sin(x-m)+
		\int_{(m,x)}\sin(x-t)\,\dd\mu(t),
	\end{align*}
	whereas, for \(x\leq m\),
	\begin{align*}
		v(x)-s(x)
		=
		\bigl(d-v'_-(m)\bigr)\sin(m-x)+
		\int_{(x,m)}\sin(t-x)\,\dd\mu(t).
	\end{align*}
	All the terms on the right-hand sides are nonnegative, because
	\(b-a\leq\pi/2\), \(\mu\geq0\), and
	\(v'_-(m)\leq d\leq v'_+(m)\). Hence
	$
		v\geq s
$
on $[a,b]$.	

	\textbf{Step 2.} 
	On the component of \(\{s>0\}\cap I\) containing \(m\), set
	\[
	W=sv'-s'v.
	\]
	Differentiation in the sense of measures gives
	\begin{align*}
		\dd W=s\,\dd\mu,
	\end{align*}
	while
	\[
	W(m-)=v(m)\bigl(v'_-(m)-d\bigr)\leq0,\qquad
	W(m+)=v(m)\bigl(v'_+(m)-d\bigr)\geq0.
	\]
	Since \(s>0\) and \(\mu\geq0\), it follows that \(W\leq0\)
	to the left of \(m\) and \(W\geq0\) to its right.
	The identity
	\[
	\Big(\frac{v}{s}\Big)'=\frac{W}{s^2}
	\]
	holds almost everywhere.
	It follows that
	$
	Q=(s/v)^\alpha
	$
	is nondecreasing to the left of \(m\) and nonincreasing to its
	right. Set \(A_0=v^\alpha zz'\). By \eqref{eq:strict-z-eigen},
	\begin{align*}
		A_0'=v^\alpha(z'^2-\lambda z^2).
	\end{align*}
	Moreover, \(A_0\geq0\) to the left of \(m\) and \(A_0\leq0\)
	to its right. Thus \(A_0\,\dd Q\) is a nonnegative measure
	on the region under consideration.

	Suppose first that \(s>0\) on \([a,b]\). Since
	\(A_0(a)=A_0(b)=0\), integration by parts gives
		\begin{align*}
			\int_a^b s^\alpha(z'^2-\lambda z^2)\,\dd x
			=\int_a^b Q A_0'\,\dd x
			=-\int_a^b A_0\,\dd Q\leq0.
		\end{align*}
	In particular, this calculation does not require derivatives of $v$ at $a, b$. The Rayleigh characterization
	and Lemma ~\ref{lem:harmonic-model} \,\textup{(i)} imply 
	\[
	\lambda
	\geq
	\frac{\int_a^b s^\alpha z'^2\,\dd x}
	{\int_a^b s^\alpha z^2\,\dd x}
	\geq
	\lambda_D(I;s^\alpha)> \alpha+1 .
	\]

	\textbf{Step 3.} The mixed comparison when \(s\) has a zero.
	The zeros of a nonzero solution of \(s''+s=0\) are separated
	by \(\pi\). Since \(b-a\leq\pi/2\), there is at most one zero
	\(c\) in \([a,b]\), and \(c\ne m\).
	If \(c<m\), then
	\[
	s(x)=C\sin(x-c)>0\quad\hbox{on }[m,b]
	\]
	for some \(C>0\), and
	\[
	0<m-c<b-c\leq b-a\leq\frac{\pi}{2}.
	\]
	On \([m,b]\), \(Q\) is continuous and monotone, and
	\(A_0(m)=A_0(b)=0\). Hence the same integration by parts yields
	\[
	\int_m^b s^\alpha(z'^2-\lambda z^2)\,\dd x
	=-\int_m^b A_0\,\dd Q\leq0.
	\]
	Under \(t=x-c\), the restriction of \(z\) is an admissible
	mixed test function on \([m-c,b-c]\), and the factor
	\(C^\alpha\) cancels from its Rayleigh quotient. Thus
	Lemma~\ref{lem:harmonic-model}\,\textup{(ii)} gives
	\[
	\lambda\geq
	\frac{\int_m^b s^\alpha z'^2\,\dd x}
	{\int_m^b s^\alpha z^2\,\dd x}
	\geq\lambda_{ND}\bigl([m-c,b-c];\sin^\alpha t\bigr)
	>\alpha+1,
	\]
	where strictness follows from \(m-c>0\).
	If \(c>m\), apply the same argument on \([a,m]\) with
	\(t=c-x\); the resulting mixed interval is
	\([c-m,c-a]\), whose left endpoint is positive and whose
	right endpoint is at most \(\pi/2\).
	This proves the lemma.
\end{proof}

Here \(H^1(\T_\pi)\) is the space of \(\pi\)-periodic functions
whose values and weak first derivatives are square integrable.
For a positive continuous \(\pi\)-periodic support function \(v\),
let \(\lambda_1(v^{-\alpha})\) denote the first nonzero periodic
eigenvalue of
\[
-(v^{-\alpha}Y')'=\lambda v^{-\alpha}Y.
\]
For \(Y\in H^1(\T_\pi)\), denote its \(v^{-\alpha}\)-weighted mean by
\[
\bar Y=
\frac{\int_{\T_\pi}v^{-\alpha}Y\,\dd\phi}
{\int_{\T_\pi}v^{-\alpha}\,\dd\phi}.
\]
The interval comparison now yields the periodic spectral gap needed below.

\Needspace{6\baselineskip}
\begin{proposition}
\label{lem:uniform-strict-gap}
	Let \(0<\alpha<2\).
	\begin{enumerate}[label=\textup{(\roman*)}]
		\item For every positive continuous \(\pi\)-periodic support
		function \(v\), one has
		\[
		\lambda_1(v^{-\alpha})>\alpha+1.
		\]
		Consequently, every \(Y\in H^1(\T_\pi)\) satisfies
		\begin{equation}\label{eq:key-gap}
			\int_{\T_\pi}v^{-\alpha}Y'^2\,\dd\phi
			\geq(\alpha+1)\int_{\T_\pi}v^{-\alpha}(Y-\bar Y)^2\,\dd\phi.
		\end{equation}
		Equality in \eqref{eq:key-gap} holds if and only if \(Y\) is constant.
		\item Given \(0<m_0<M_0<\infty\), there exists
		\(\delta=\delta(\alpha,m_0,M_0)>0\) such that every such
		\(v\) with \(m_0\leq v\leq M_0\) satisfies
		\begin{equation}\label{eq:uniform-key-gap}
			\int_{\T_\pi}v^{-\alpha}Y'^2\,\dd\phi
			\geq(\alpha+1+\delta)
			\int_{\T_\pi}v^{-\alpha}(Y-\bar Y)^2\,\dd\phi
		\end{equation}
		for every \(Y\in H^1(\T_\pi)\).
	\end{enumerate}
\end{proposition}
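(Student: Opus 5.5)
Part (i) is proved through the nodal structure of a first nonzero periodic eigenfunction, with each nodal interval handled by Lemma~\ref{lem:dirichlet-support}; part (ii) then follows by compactness.

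\textbf{Part (i).} Since $v$ is positive and continuous on the compact circle $\T_\pi$, the weight $v^{-\alpha}$ is bounded above and below. Hence the periodic Sturm--Liouville problem $-(v^{-\alpha}Y')'=\lambda v^{-\alpha}Y$ has discrete spectrum $0=\lambda_0<\lambda_1\leq\cdots$. The Rayleigh characterization gives \eqref{eq:key-gap} with the constant $\lambda_1(v^{-\alpha})$ in place of $\alpha+1$, so it suffices to show $\lambda_1>\alpha+1$. Let $Y$ be a real first nonzero eigenfunction. It has zero weighted mean, so it changes sign. By Sturm oscillation for periodic problems, it has exactly two zeros on $\T_\pi$, splitting the circle into two nodal arcs $I_1,I_2$ of total length $\pi$. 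On each arc, $Y$ is a first Dirichlet eigenfunction, so $\lambda_1=\lambda_D(I_j;v^{-\alpha})$. One arc, say $I_1$, has length at most $\pi/2$.

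\textbf{Main obstacle.} Lemma~\ref{lem:dirichlet-support} handles the weight $v^{\alpha}$ with $v''+v\geq0$, whereas here the weight is $v^{-\alpha}$. I would first try to transfer between the two via a Liouville/duality substitution. Set $Z=v^{-\alpha}Y'$. Then $Z'=-\lambda v^{-\alpha}Y$, so $-(v^{\alpha}Z')'=\lambda v^{-\alpha}Z$. This converts the weight in the derivative term, but not in the zeroth-order term, so it is not enough on its own. The cleaner route is to redo the comparison argument of Lemma~\ref{lem:dirichlet-support} directly on $I_1$. Take $s$ to be the harmonic minorant $s\leq v$ touching $v$ at the maximum point $m$ of $Y$, exactly as in Step~1 there, and compare with the weight $s^{-\alpha}$. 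The required harmonic model is then the bound $\lambda_D(I;s^{-\alpha})>\alpha+1$ for intervals of length at most $\pi/2$ on which $s>0$. For $s=\cos$ this would be proved by the same Wronskian and zero-separation computation as in Lemma~\ref{lem:harmonic-model}. If $s$ vanishes on $I_1$, the weight $s^{-\alpha}$ blows up; this should only help, with a Hardy-type argument near the zero replacing the mixed problem. I expect this step to be the main obstacle and to need the most care.

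\textbf{Equality in (i).} If equality holds for some $Y$ in \eqref{eq:key-gap}, expand $Y-\bar Y$ in eigenfunctions. The strict gap $\lambda_1>\alpha+1$ forces every nonconstant coefficient to vanish, so $Y$ is constant.

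\textbf{Part (ii).} Argue by contradiction. Suppose there are support functions $v_k$ with $m_0\leq v_k\leq M_0$ and $\lambda_1(v_k^{-\alpha})\to\alpha+1$. Support functions with these bounds are uniformly Lipschitz, so after passing to a subsequence $v_k\to v$ uniformly (Section~\ref{sec:prelim}). The limit $v$ is again a positive support function, and the weights $v_k^{-\alpha}$ converge uniformly to $v^{-\alpha}$ while staying bounded between fixed positive constants. By the min--max principle, $\lambda_1$ depends continuously on the weight under such convergence. Indeed, both Rayleigh-quotient integrals are perturbed by factors $1+o(1)$, and the weighted mean moves accordingly. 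Hence $\lambda_1(v^{-\alpha})=\lim_k\lambda_1(v_k^{-\alpha})=\alpha+1$, contradicting (i). So $\lambda_1\geq\alpha+1+\delta$ uniformly, which gives \eqref{eq:uniform-key-gap}.
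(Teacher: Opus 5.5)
Your part (ii) and the equality clause of (i) match the paper, but part (i) has a genuine gap, and it comes from an algebra slip. With $Z=v^{-\alpha}Y'$ you have $Y'=v^{\alpha}Z$, so differentiating $v^{\alpha}Z'=-\lambda_1 Y$ gives
\[
-(v^{\alpha}Z')'=\lambda_1 Y'=\lambda_1 v^{\alpha}Z,
\]
not $\lambda_1 v^{-\alpha}Z$. The substitution therefore turns the $v^{-\alpha}$ problem into exactly the $v^{\alpha}$ problem that Lemma~\ref{lem:dirichlet-support} handles, and this is the paper's proof. $Z$ is continuous, since $Z'=-\lambda_1v^{-\alpha}Y$ is bounded. $Z\not\equiv0$ because $Y$ is nonconstant. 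Moreover $\int_{\T_\pi}v^{\alpha}Z\,\dd\phi=\int_{\T_\pi}Y'\,\dd\phi=0$, so $Z$ changes sign. Take a shortest nodal arc $I$ of $Z$; it lies between consecutive critical points of $Y$, not zeros of $Y$, and has length at most $\pi/2$. On $I$, $\pm Z$ is a positive first Dirichlet eigenfunction, so $\lambda_1=\lambda_D(I;v^{\alpha})>\alpha+1$.

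The replacement route you sketch does not close. Suppose you run the argument of Lemma~\ref{lem:dirichlet-support} on a nodal arc $I_1$ of $Y$, using the minorant $s\le v$ with $s(m)=v(m)$, even in the case $s>0$ on $\overline{I_1}$. The ratio that appears is $\widetilde Q=(v/s)^{\alpha}$, since $s^{-\alpha}=\widetilde Q\,v^{-\alpha}$. This $\widetilde Q$ is nonincreasing left of $m$ and nondecreasing right of $m$. Meanwhile $A_0=v^{-\alpha}YY'$ is $\geq0$ on the left and $\leq0$ on the right. Hence
\[
\int_{I_1}s^{-\alpha}\bigl(Y'^2-\lambda_1Y^2\bigr)\,\dd\phi=-\int_{I_1}A_0\,\dd\widetilde Q\geq0 .
\]
This is the reverse of what you need. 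It only bounds $\lambda_1$ above by the $s^{-\alpha}$ Rayleigh quotient of $Y$, and so gives no lower bound on $\lambda_1$.

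Two further pieces are missing. The model bound $\lambda_D(I;s^{-\alpha})>\alpha+1$ is not supplied by Lemma~\ref{lem:harmonic-model}, whose fundamental system is specific to the weight $\cos^{\alpha}$. The Hardy-type treatment when $s$ vanishes is only asserted. So as written, the key inequality $\lambda_1(v^{-\alpha})>\alpha+1$ is not proved; correcting the $Z$-equation removes the obstacle entirely.
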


\begin{proof}
	Let
	$
	\lambda_1=\lambda_1(v^{-\alpha}),
	$
	and let \(Y\) be a corresponding first nonzero eigenfunction.
	By the variational characterization of \(\lambda_1\),
	\[
	-\bigl(v^{-\alpha}Y'\bigr)'
	=\lambda_1v^{-\alpha}Y,
	\quad
	\int_{\T_\pi}v^{-\alpha}Y\,\dd\phi=0,
	\]
	in the weak sense. Set
	$
	z=v^{-\alpha}Y'.
	$
	The eigenvalue equation gives
	\[
	-z'=\lambda_1v^{-\alpha}Y,
	\quad
	-v^\alpha z'=\lambda_1Y.
	\]
	Differentiating the second identity and using \(Y'=v^\alpha z\),
	we obtain
	\begin{equation}\label{eq:lemma58-z}
		-\bigl(v^\alpha z'\bigr)'=\lambda_1v^\alpha z
	\end{equation}
	weakly on \(\T_\pi\). The identity
	\(z'=-\lambda_1v^{-\alpha}Y\) implies that \(z\) is continuous.

	By the periodicity of \(Y\),
	\[
	\int_{\T_\pi}v^\alpha z\,\dd\phi
	=
	\int_{\T_\pi}Y'\,\dd\phi
	=0.
	\]
	Moreover, \(z\not\equiv0\), since otherwise \(Y\) would be
	constant. Hence \(z\) changes sign. Let \(I=(a,b)\) be a shortest
	nodal interval of \(z\). Since the circle has length \(\pi\) and
	\(z\) has at least two nodal intervals,
	\[
	|I|\leq\frac{\pi}{2}.
	\]
	On \(I\), \(z\) has one sign and vanishes at the endpoints.
	Therefore \(z\) is a first Dirichlet eigenfunction of
	\eqref{eq:lemma58-z} on \(I\), and consequently
	\[
	\lambda_1(v^{-\alpha})
	=
	\lambda_D(I;v^\alpha).
	\]
	Lemma~\ref{lem:dirichlet-support} now yields
	\[
	\lambda_1(v^{-\alpha})
	=
	\lambda_D(I;v^\alpha)
	>
	\alpha+1.
	\]

	For an arbitrary nonconstant \(Y\in H^1(\T_\pi)\), subtracting
	\(\bar Y\) and using the variational characterization gives
	\[
	\int_{\T_\pi}v^{-\alpha}Y'^2\,\dd\phi
	\geq\lambda_1(v^{-\alpha})
	\int_{\T_\pi}v^{-\alpha}(Y-\bar Y)^2\,\dd\phi
	>(\alpha+1)\int_{\T_\pi}v^{-\alpha}(Y-\bar Y)^2\,\dd\phi.
	\]
	For constant \(Y\), both sides of \eqref{eq:key-gap} vanish.
	This proves \textup{(i)}.

	For \textup{(ii)}, the support functions satisfying
	\[
	m_0\leq v\leq M_0,\quad v+v''\geq0
	\]
	form a compact subset of \(C^0(\T_\pi)\).
	Indeed, they are the support functions of convex bodies that contain \(m_0B^2\) and are contained in \(M_0B^2\), and hence are uniformly bounded and uniformly Lipschitz.
	Moreover, if \(v_j\to v\) uniformly, then \(v_j^{-\alpha}\to v^{-\alpha}\) uniformly, and the Rayleigh-Ritz characterization of the first nonzero eigenvalue gives
	\[
	\lambda_1(v_j^{-\alpha})\longrightarrow
	\lambda_1(v^{-\alpha}).
	\]
	Compactness together with \textup{(i)} therefore yields a strictly positive uniform gap \(\delta\).
\end{proof}

For a positive \(\pi\)-periodic function \(v\) and a \(\pi\)-periodic function \(g\), write
\[
\bar g=\frac{\int_{\T_\pi}gv^{-(\alpha+2)}\,\dd\phi}
{\int_{\T_\pi}v^{-(\alpha+2)}\,\dd\phi}
\]
whenever the integrals are finite. We next convert the periodic spectral gap into the polar estimate required above.

\begin{lemma}\label{lem:hardy-cauchy}
	Let \(0<\alpha<2\), and let \(v>0\) be a smooth
	\(\pi\)-periodic support function with \(v+v''>0\).
	Then every smooth \(\pi\)-periodic function \(g\) satisfies
	\begin{equation}\label{eq:polar-alpha}
		\int_{\T_\pi}\frac{g'^2v^{-(\alpha+1)}}{v+v''}\,\dd\phi
		\geq(\alpha+1)\int_{\T_\pi}(g-\bar g)^2v^{-(\alpha+2)}\,\dd\phi.
	\end{equation}
\end{lemma}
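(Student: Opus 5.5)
The plan is to deduce \eqref{eq:polar-alpha} from the periodic spectral gap of Proposition~\ref{lem:uniform-strict-gap}\,(i) by a duality argument built on a periodic primitive. Replacing \(g\) by \(g-\bar g\), I assume \(\bar g=0\), that is, \(\int_{\T_\pi}g\,v^{-(\alpha+2)}\,\dd\phi=0\). Set
\[
a=\frac{v^{-(\alpha+1)}}{v+v''}>0,\qquad w=v^{-(\alpha+2)}.
\]
Because \(gw\) has zero integral over a period, it has a smooth \(\pi\)-periodic primitive \(\Phi\) with \(\Phi'=gw\). This primitive is unique up to an additive constant \(c\), which I keep free for now.

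\textbf{Step 1: duality.} Integrating by parts on \(\T_\pi\) gives \(\int g^2w=\int g\,\Phi'=-\int g'\Phi\). The Cauchy--Schwarz inequality then gives
\[
\Bigl(\int_{\T_\pi}g^2w\Bigr)^2\le\int_{\T_\pi}a\,g'^2\cdot\int_{\T_\pi}\frac{\Phi^2}{a}.
\]
Since \(\int g^2w=\int\Phi'^2/w=\int\Phi'^2v^{\alpha+2}\), it suffices to prove the dual inequality
\begin{equation}\label{eq:dual-plan}
 (\alpha+1)\int_{\T_\pi}\Phi^2(v+v'')v^{\alpha+1}\,\dd\phi
 \le\int_{\T_\pi}\Phi'^2v^{\alpha+2}\,\dd\phi
\end{equation}
for a suitable choice of the free constant in \(\Phi\). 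Indeed, \eqref{eq:dual-plan} together with the Cauchy--Schwarz bound yields \(\int g^2w\le(\alpha+1)^{-1}\int a\,g'^2\), which is exactly \eqref{eq:polar-alpha}.

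\textbf{Step 2: reduction of \eqref{eq:dual-plan} to \eqref{eq:key-gap}.} This is the key step and the one I expect to be the main obstacle. The measure \((v+v'')\,\dd\phi\) is the surface area measure of the body with support function \(v\). So \eqref{eq:dual-plan} should be read in the outer-normal coordinates of that body, in reverse to the passage from \eqref{eq:support-poincare} to \eqref{eq:polar-poincare}. Concretely, I would use \eqref{eq:polar-coordinate-identities} and \eqref{eq:polar-integral-identities} with the roles of \(h\) and \(v\) exchanged, that is, apply them to the polar body. In those coordinates the two sides of \eqref{eq:dual-plan} should become \(\int Y'^2\tilde v^{-\alpha}\) and \((\alpha+1)\int Y^2\tilde v^{-\alpha}\), where \(\tilde v\) is the support function of the polar body and \(Y\) is \(\Phi\) times an explicit power of the radial function. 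The free constant in \(\Phi\) is then chosen so that the \(\tilde v^{-\alpha}\)-weighted mean of \(Y\) vanishes. With that choice, \eqref{eq:key-gap} applied to \(\tilde v\), which is a positive continuous \(\pi\)-periodic support function, gives \eqref{eq:dual-plan} directly.

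My first attempt will be the substitution \(\Phi=vY\) followed by integration by parts of the \(v''\)-term. If the cross terms \(v'YY'\) and \(v'^2Y^2\) do not cancel into the weighted form, I will move to the polar/normal change of variables described above. In that version the key identity to verify is that \(v^{\alpha+2}\,\dd\phi\), combined with the Jacobian \(\dd\phi/\dd\theta\), produces the weight \(\tilde v^{-\alpha}\).

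\textbf{Step 3: smoothness and admissibility.} Since \(v\) is smooth with \(v+v''>0\), the polar body is \(C^\infty_+\). All changes of variables are therefore smooth diffeomorphisms of \(\T_\pi\), and the function \(Y\) obtained from \(\Phi\) lies in \(H^1(\T_\pi)\), so Proposition~\ref{lem:uniform-strict-gap}\,(i) applies. The same Proposition would also record when equality holds in \eqref{eq:dual-plan}, and the Cauchy--Schwarz step shows when equality can occur in \eqref{eq:polar-alpha}; this information will be needed later for the equality case.
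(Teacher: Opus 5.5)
Your Step~1 (periodic primitive plus Cauchy--Schwarz) is correct and coincides with the paper's concluding step. Your dual inequality
\[
(\alpha+1)\int_{\T_\pi}\Phi^2(v+v'')v^{\alpha+1}\,\dd\phi\le\int_{\T_\pi}\Phi'^2v^{\alpha+2}\,\dd\phi
\]
is exactly the paper's intermediate estimate. The gap is Step~2, which is the real content of the lemma and which you leave as a guess; neither route you propose works. Take a substitution $Y=\Phi v^{\gamma}$. The weight must be $v^{\alpha+2-2\gamma}$ to reproduce the energy, and integrating the cross term $2\gamma\Phi\Phi'v^{\alpha+1}v'$ by parts gives
\[
\int_{\T_\pi}v^{\alpha+2-2\gamma}\bigl(Y'^2-\gamma Y^2\bigr)\,\dd\phi
=\int_{\T_\pi}\Phi'^2v^{\alpha+2}\,\dd\phi-\gamma\int_{\T_\pi}\Phi^2(v+v'')v^{\alpha+1}\,\dd\phi
+\gamma(\gamma-\alpha-1)\int_{\T_\pi}\Phi^2v^{\alpha}v'^2\,\dd\phi .
\]
Your first attempt $\Phi=vY$ is $\gamma=-1$, and it fails in three ways. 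The weight is $v^{\alpha+4}$, about which \eqref{eq:key-gap} says nothing. The curvature term enters with the wrong sign. And a term $(\alpha+2)\int\Phi^2v^{\alpha}v'^2$ survives.

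The polar fallback fails for a structural reason. Let $\psi$ be the polar angle of the boundary point of the body with support function $v$, and let $\tilde v$ be the support function of its polar. Then $\dd\psi/\dd\phi=v(v+v'')/(v^2+v'^2)$. Under this change of variables, $\Phi^2(v+v'')v^{\alpha+1}\,\dd\phi$ becomes curvature-free, while $\Phi'^2v^{\alpha+2}\,\dd\phi$ becomes $\Phi_\psi^2\,r\,v^{\alpha}(\tilde v+\tilde v_{\psi\psi})^{-1}\,\dd\psi$ with $r=\sqrt{v^2+v'^2}$. So the change of variables only moves the curvature factor from one side to the other. It returns an inequality of the same type as \eqref{eq:polar-alpha}, not a plain weighted Poincar\'e inequality of the form \eqref{eq:key-gap}.

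The missing idea is to take $\gamma=\alpha+1$, i.e.\ $Y=\Phi v^{\alpha+1}$, with the weight $v^{-\alpha}$ of the \emph{same} function $v$. This is the only nonzero exponent for which the $v'^2$ term drops out, and it automatically produces the constant $\alpha+1$ in front of the curvature term. The display above then becomes exactly \eqref{eq:hardy-identity}. Fix the free constant in $\Phi$ by $\int_{\T_\pi}\Phi v\,\dd\phi=0$, which is precisely $\int_{\T_\pi}v^{-\alpha}Y\,\dd\phi=0$. Then Proposition~\ref{lem:uniform-strict-gap}\,(i), applied to $v$ itself with no polar body, yields the dual inequality, and your Step~1 finishes the proof.
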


\begin{proof}
	\textbf{Step 1.} An estimate for a periodic primitive.
	Subtracting \(\bar g\) leaves the derivative unchanged, so assume
	\(\int_{\T_\pi}gv^{-(\alpha+2)}\,\dd\phi=0\).
	Choose the additive constant of a periodic primitive so that
	\begin{equation}\label{eq:normalized-primitive}
		P'=gv^{-(\alpha+2)},\quad \int_{\T_\pi}Pv\,\dd\phi=0.
	\end{equation}
	This is possible because the derivative has zero integral and \(\int v>0\).
	Set \(Y=Pv^{\alpha+1}\). Then
	\[
	\int_{\T_\pi}v^{-\alpha}Y\,\dd\phi=0,\quad
	\int_{\T_\pi}v^{-\alpha}Y^2\,\dd\phi
	=\int_{\T_\pi}P^2v^{\alpha+2}\,\dd\phi.
	\]
	Expanding \(Y'\) and integrating the mixed term by parts gives
	\[
	\begin{aligned}
		\int_{\T_\pi}v^{-\alpha}Y'^2\,\dd\phi
		={}&\int_{\T_\pi}P'^2v^{\alpha+2}\,\dd\phi
		+(\alpha+1)\int_{\T_\pi}(P^2)'v^{\alpha+1}v'\,\dd\phi\\
		&+(\alpha+1)^2\int_{\T_\pi}P^2v^\alpha(v')^2\,\dd\phi\\
		={}&\int_{\T_\pi}P'^2v^{\alpha+2}\,\dd\phi
		-(\alpha+1)\int_{\T_\pi}P^2v^{\alpha+1}v''\,\dd\phi.
	\end{aligned}
	\]
	Consequently,
	\begin{equation}\label{eq:hardy-identity}
		\begin{aligned}
			&\int_{\T_\pi}v^{-\alpha}Y'^2\,\dd\phi
			-(\alpha+1)\int_{\T_\pi}v^{-\alpha}Y^2\,\dd\phi\\
			&\quad=
			\int_{\T_\pi}P'^2v^{\alpha+2}\,\dd\phi
			-(\alpha+1)\int_{\T_\pi}P^2(v+v'')v^{\alpha+1}\,\dd\phi.
		\end{aligned}
	\end{equation}
	Proposition~\ref{lem:uniform-strict-gap}\,\textup{(i)} applied to \(Y\) therefore yields
	\begin{align*}
		\int_{\T_\pi}P'^2v^{\alpha+2}\,\dd\phi
		\geq(\alpha+1)\int_{\T_\pi}P^2(v+v'')v^{\alpha+1}\,\dd\phi.
	\end{align*}

	\textbf{Step 2.} The Cauchy-Schwarz estimate.
	By \eqref{eq:normalized-primitive} and periodic integration by parts,
	\[
	\int_{\T_\pi}g^2v^{-(\alpha+2)}\,\dd\phi
	=\int_{\T_\pi}P'^2v^{\alpha+2}\,\dd\phi
	=-\int_{\T_\pi}g'P\,\dd\phi.
	\]
	Hence
		\begin{align*}
			\Big(\int_{\T_\pi}g^2v^{-(\alpha+2)}\,\dd\phi\Big)^2
			\leq{}&\Big(\int_{\T_\pi}
			\frac{g'^2v^{-(\alpha+1)}}{v+v''}\,\dd\phi\Big)
			\Big(\int_{\T_\pi}P^2(v+v'')v^{\alpha+1}\,\dd\phi\Big)\\
			\leq{}&\frac1{\alpha+1}
			\Big(\int_{\T_\pi}\frac{g'^2v^{-(\alpha+1)}}{v+v''}\,\dd\phi\Big)
			\int_{\T_\pi}g^2v^{-(\alpha+2)}\,\dd\phi.
		\end{align*}
	For \(g\not\equiv0\), division by the last integral proves
	\eqref{eq:polar-alpha}; the zero function satisfies it as well.
	Restoring the weighted mean completes the proof.
\end{proof}

Taking \(\alpha=q-2\) proves \eqref{eq:polar-poincare}, and hence
\eqref{eq:support-poincare}. To retain rigidity under smooth
approximation, we strengthen this estimate uniformly.

\begin{proposition}
	\label{prop:quantitative-rigidity}
	Fix \(2<q<4\) and \(0<m_0<M_0<\infty\).
	Let \(v\) be a smooth \(\pi\)-periodic support function satisfying
	\[
	m_0\leq v\leq M_0,
	\qquad
	v+v''>0,
	\]
	and let \(g\) be a smooth \(\pi\)-periodic function such that
	\[
	\int_{\T_\pi} g v^{-q}\,\dd\phi=0.
	\]
	Then there exists a constant
	$
	\kappa=\kappa(q,m_0,M_0)>0,
	$
	independent of \(v\) and \(g\), such that
	\begin{equation}\label{eq:quantitative-polar-gap}
		\int_{\T_\pi}\frac{g'^2v^{1-q}}{v+v''}\,\dd\phi
		-(q-1)\int_{\T_\pi}g^2v^{-q}\,\dd\phi
		\geq
		\kappa\int_{\T_\pi}g^2v^{-q}\,\dd\phi.
	\end{equation}
\end{proposition}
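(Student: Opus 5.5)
We rerun the proof of Lemma~\ref{lem:hardy-cauchy} with $\alpha=q-2$, replacing Proposition~\ref{lem:uniform-strict-gap}\,\textup{(i)} by the uniform gap \textup{(ii)}. The normalization $\int_{\T_\pi}gv^{-q}\,\dd\phi=0$ is exactly the mean-zero condition in that lemma, since $v^{-(\alpha+2)}=v^{-q}$. Let $\delta=\delta(\alpha,m_0,M_0)$ be the constant from Proposition~\ref{lem:uniform-strict-gap}\,\textup{(ii)}. Then $v$ is an admissible support function with $m_0\le v\le M_0$.

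We first take the periodic primitive $P$ of \eqref{eq:normalized-primitive}, so that $P'=gv^{-q}$ and $\int_{\T_\pi}Pv\,\dd\phi=0$, and set $Y=Pv^{\alpha+1}$. Then $\bar Y=0$. The identity \eqref{eq:hardy-identity} combined with \eqref{eq:uniform-key-gap} gives
\[
\int_{\T_\pi}P'^2v^{q}\,\dd\phi-(\alpha+1)\int_{\T_\pi}P^2(v+v'')v^{\alpha+1}\,\dd\phi
\ \ge\ \delta\int_{\T_\pi}v^{-\alpha}Y^2\,\dd\phi=\delta\int_{\T_\pi}P^2v^{q}\,\dd\phi .
\]
Writing $A=\int g^2v^{-q}=\int P'^2v^q$, $B=\int P^2(v+v'')v^{\alpha+1}$, $C=\int P^2v^q$, this reads $A\ge(\alpha+1)B+\delta C$.

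The main obstacle is that $C$ is not comparable to $B$ without a bound on $v+v''$, which we do not have. We therefore apply the gain differently. From \eqref{eq:hardy-identity} the quantity $A-(\alpha+1)B$ is also nonnegative. Then Step~2 of Lemma~\ref{lem:hardy-cauchy} gives $A^2\le D\,B$, where $D=\int g'^2v^{-(\alpha+1)}/(v+v'')$ is the left side of \eqref{eq:quantitative-polar-gap}. It suffices to show $A\ge(\alpha+1+\delta')B$ for some uniform $\delta'>0$, because then $D\ge A^2/B\ge(\alpha+1+\delta')A$ and we may take $\kappa=\delta'$.

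To bound $B$ by $C$ plus a small part of $A$, we use $\int P^2 v^{\alpha+1}v''=-\int (P^2v^{\alpha+1})'v'$. Integrating by parts turns $B$ into
\[
B=\int P^2v^{\alpha+2}-(\alpha+1)\int P^2v^{\alpha}v'^2-2\int PP'v^{\alpha+1}v'.
\]
The support functions with $m_0\le v\le M_0$ are uniformly Lipschitz, with $|v'|\le M_0$, so this gives $B\le c_1C+c_2\sqrt{CA}$ with constants depending only on $m_0,M_0,q$. Hence $B\le c_1 C+\epsilon A+c_2^2C/(4\epsilon)$ for every $\epsilon>0$. Combining this with $A\ge(\alpha+1)B+\delta C$, we get $C\le(A-(\alpha+1)B)/\delta$, and therefore
\[
B\le \epsilon A+c_3(\epsilon)\,\frac{A-(\alpha+1)B}{\delta}.
\]
Rearranging yields $A\,(1+(\alpha+1)c_3/\delta)\ge B\,(\alpha+1+(\alpha+1)^2c_3/\delta)-\epsilon' A$-type terms. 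After choosing $\epsilon$ small, a short algebraic manipulation should produce $A\ge(\alpha+1)(1+\eta)B$ with $\eta=\eta(q,m_0,M_0)>0$. This is the step to check carefully: the coefficients must give a strict improvement factor. If they do not, the fallback is to argue by contradiction with compactness. We would take a sequence $(v_j,g_j)$ with normalized $A_j=1$ and deficit tending to zero, extract convergent subsequences of $v_j$ (uniformly) and of $P_j$ (weakly in $H^1$, using the $A$ bound), and contradict the uniform gap of Proposition~\ref{lem:uniform-strict-gap}\,\textup{(ii)}.
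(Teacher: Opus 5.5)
Your proposal is correct, and the step you flagged does close. From $B\le\epsilon A+c_3(\epsilon)C$ and $\delta C\le A-(q-1)B$, the clean rearrangement is
\[
B\bigl(\delta+(q-1)c_3(\epsilon)\bigr)\le A\bigl(\epsilon\delta+c_3(\epsilon)\bigr).
\]
In your rearranged line, the extra term is actually $+(1-(q-1)\epsilon)A$, not a subtracted one. The ratio $(\epsilon\delta+c_3)/(\delta+(q-1)c_3)$ is strictly below $1/(q-1)$ exactly when $\epsilon<1/(q-1)$.

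Concretely, take $c_1=1$, $c_2=2M_0/m_0$ (using $|v'|\le\sqrt{v^2+v'^2}\le M_0$ and $v^{q-1}\le m_0^{-1}v^q$) and $\epsilon=1/(2(q-1))$. Then $A\ge(q-1)(1+\eta)B$ with
\[
\eta=\frac{\delta/2}{\delta/2+(q-1)c_3},
\]
and $D\ge A^2/B$ gives $\kappa=(q-1)\eta$. If $B=0$, the condition $v+v''>0$ forces $P\equiv0$, hence $g\equiv0$, and the inequality is trivial. So the compactness fallback is unnecessary.

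Your first step coincides with the paper's: the primitive $P$, $Y=Pv^{q-1}$, the uniform gap of Proposition~\ref{lem:uniform-strict-gap}\,(ii), and Cauchy--Schwarz giving $A^2\le DB$. The routes diverge in how the gain $\delta C$ is converted into a gain over $B$; as you noted, $C$ and $B$ are not directly comparable.

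The paper argues by contradiction. It normalizes $A_j=1$ with $D_j\to q-1$, which forces $B_j\to1/(q-1)$ and $C_j\to0$. Using the zero of $P_j$ supplied by $\int_{\T_\pi}P_jv_j\,\dd\phi=0$, it gets $\|P_j\|_\infty^2\le2\|P_j\|_2\|P_j'\|_2\to0$. It then bounds $B_j\le M_0^{q-1}\|P_j\|_\infty^2\int_{\T_\pi}(v_j+v_j'')\,\dd\phi=M_0^{q-1}\|P_j\|_\infty^2\int_{\T_\pi}v_j\,\dd\phi\to0$, a contradiction. This uses only positivity and total mass of $v+v''$, with no derivative bound on $v$.

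You instead integrate by parts to move $v''$ onto $P^2v^{q-1}$ and use the Lipschitz bound on $v$. This gives $B\le C+(2M_0/m_0)\sqrt{CA}$ without needing a zero of $P$.

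Each approach buys something different. Yours yields an explicit $\kappa$ in terms of $\delta,q,m_0,M_0$ and avoids a second compactness argument, although $\delta$ itself still comes from compactness. The paper's is shorter and avoids the constant bookkeeping you left unchecked. The paper's ingredients also give $B\le2\pi M_0^qm_0^{-q}\sqrt{CA}$, which would feed into your algebra just as well.
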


\begin{proof}
	\textbf{Step 1.} 
	The mean-zero condition on \(g\) allows us to choose a smooth
	\(\pi\)-periodic \(P\) satisfying
	\begin{equation}\label{eq:quantitative-primitive}
		P'=gv^{-q},\quad \int_{\T_\pi}Pv\,\dd\phi=0.
	\end{equation}
	Set \(Y=Pv^{q-1}\). Since \(\alpha=q-2\),
	\[
	\int_{\T_\pi}v^{-\alpha}Y\,\dd\phi=0,\quad
	\int_{\T_\pi}v^{-\alpha}Y^2\,\dd\phi
	=\int_{\T_\pi}P^2v^q\,\dd\phi,
	\]
	and
	\begin{align*}
		\int_{\T_\pi}P'^2v^q\,\dd\phi
		=\int_{\T_\pi}g^2v^{-q}\,\dd\phi.
	\end{align*}
	Proposition~\ref{lem:uniform-strict-gap}\,\textup{(ii)} and
	\eqref{eq:hardy-identity} therefore give a constant
	\(\delta=\delta(q,m_0,M_0)>0\), independent of \(v\), such that
	\begin{equation}\label{eq:A-H-uniform}
		\begin{aligned}
			\int_{\T_\pi}g^2v^{-q}\,\dd\phi
			-(q-1)\int_{\T_\pi}P^2(v+v'')v^{q-1}\,\dd\phi
			\geq
			\delta\int_{\T_\pi}P^2v^q\,\dd\phi.
		\end{aligned}
	\end{equation}
	On the other hand, integration by parts gives
	\[
	\int_{\T_\pi}g^2v^{-q}\,\dd\phi
	=\int_{\T_\pi}gP'\,\dd\phi
	=-\int_{\T_\pi}g'P\,\dd\phi.
	\]
	Cauchy-Schwarz then yields
	\begin{equation}\label{eq:A-E-H}
		\Big(\int_{\T_\pi}g^2v^{-q}\,\dd\phi\Big)^2
		\leq
		\Big(\int_{\T_\pi}
		\frac{g'^2v^{1-q}}{v+v''}\,\dd\phi\Big)
		\Big(\int_{\T_\pi}P^2(v+v'')v^{q-1}\,\dd\phi\Big).
	\end{equation}
	In particular, these two estimates imply
	\[
	\int_{\T_\pi}\frac{g'^2v^{1-q}}{v+v''}\,\dd\phi
	\geq(q-1)\int_{\T_\pi}g^2v^{-q}\,\dd\phi.
	\]

	\textbf{Step 2.} The uniform positive gap.
	Suppose that no uniform \(\kappa>0\) exists.
	By homogeneity and the preceding inequality, we may choose
	admissible \(v_j,g_j\) such that
	\[
	\int_{\T_\pi}g_jv_j^{-q}\,\dd\phi=0,\quad
	\int_{\T_\pi}g_j^2v_j^{-q}\,\dd\phi=1,
	\]
	and
	\begin{equation}\label{eq:quantitative-energy-sequence}
		\int_{\T_\pi}\frac{g_j'^2v_j^{1-q}}{v_j+v_j''}\,\dd\phi
		\longrightarrow q-1.
	\end{equation}
	Choose \(P_j\) as in \eqref{eq:quantitative-primitive}.
	Then \(\int_{\T_\pi}P_j'^2v_j^q\,\dd\phi=1\), and
	\eqref{eq:A-H-uniform}-\eqref{eq:A-E-H} imply
	\[
	\frac{1}{
		\displaystyle\int_{\T_\pi}
		\frac{g_j'^2v_j^{1-q}}{v_j+v_j''}\,\dd\phi}
	\leq
	\int_{\T_\pi}P_j^2(v_j+v_j'')v_j^{q-1}\,\dd\phi
	\leq\frac1{q-1}.
	\]
	Consequently,
	\begin{equation}\label{eq:quantitative-positive-limit}
		\int_{\T_\pi}P_j^2(v_j+v_j'')v_j^{q-1}\,\dd\phi
		\longrightarrow\frac1{q-1}.
	\end{equation}
	Moreover, \eqref{eq:A-H-uniform} gives
	\[
	0\leq\delta\int_{\T_\pi}P_j^2v_j^q\,\dd\phi
	\leq
	1-(q-1)\int_{\T_\pi}P_j^2(v_j+v_j'')v_j^{q-1}\,\dd\phi
	\longrightarrow0.
	\]
	Since \(v_j\geq m_0\), it follows that
\[
			\int_{\T_\pi}P_j^2\,\dd\phi
			\leq m_0^{-q}\int_{\T_\pi}P_j^2v_j^q\,\dd\phi
			\longrightarrow0,
\]
and
\[
			\int_{\T_\pi}P_j'^2\,\dd\phi
			\leq m_0^{-q}\int_{\T_\pi}P_j'^2v_j^q\,\dd\phi
			=m_0^{-q}.
\]
	The normalization \(\int_{\T_\pi}P_jv_j\,\dd\phi=0\)
	ensures that \(P_j\) has a zero. Integrating
	\((P_j^2)'=2P_jP_j'\) from such a point and applying the
	Cauchy-Schwarz, we obtain
	\[
	\|P_j\|_\infty^2
	\leq
	2\Big(\int_{\T_\pi}P_j^2\,\dd\phi\Big)^{1/2}
	\Big(\int_{\T_\pi}P_j'^2\,\dd\phi\Big)^{1/2}
	\longrightarrow0.
	\]
	Using periodicity, \(v_j+v_j''>0\), and \(v_j\leq M_0\),
	we conclude that
	\[
	\begin{aligned}
		0
		&\leq\int_{\T_\pi}P_j^2(v_j+v_j'')v_j^{q-1}\,\dd\phi
		\leq
		M_0^{q-1}\|P_j\|_\infty^2
		\int_{\T_\pi}(v_j+v_j'')\,\dd\phi\\
		&=
		M_0^{q-1}\|P_j\|_\infty^2
		\int_{\T_\pi}v_j\,\dd\phi
		\leq\pi M_0^q\|P_j\|_\infty^2\longrightarrow0.
	\end{aligned}
	\]
	This contradicts \eqref{eq:quantitative-positive-limit}
	and proves \eqref{eq:quantitative-polar-gap}.
\end{proof}

\begin{remark}\label{rem:quantitative-support-gap}
	Proposition~\ref{prop:quantitative-rigidity} has the following
	equivalent form in outer-normal coordinates.
	Let \(K\) be an origin-symmetric convex body with
	\(v=h_{K^\circ}\) satisfying the assumptions of the proposition,
	and write
	\[
	h=h_K,\quad R=h+h'',\quad r=\sqrt{h^2+(h')^2}.
	\]
	The bounds on \(v\) are equivalent to
	\(M_0^{-1}\leq r\leq m_0^{-1}\).
	For a smooth \(\pi\)-periodic \(f\), define the weighted mean
	\[
	\bar f=
	\frac{\int_0^{2\pi}fhRr^{q-2}\,\dd\theta}
	{\int_0^{2\pi}hRr^{q-2}\,\dd\theta}.
	\]
	Then, with the same constant \(\kappa\),
	\begin{equation}\label{eq:quantitative-support-gap}
		\begin{aligned}
			\int_0^{2\pi}(f')^2h^2r^{q-2}\,\dd\theta
			-(q-1)\int_0^{2\pi}(f-\bar f)^2hRr^{q-2}\,\dd\theta
			\geq
			\kappa\int_0^{2\pi}(f-\bar f)^2hRr^{q-2}\,\dd\theta.
		\end{aligned}
	\end{equation}

	Indeed, if \(\theta\) is the outer-normal angle and \(\phi\)
	is the polar angle of the corresponding boundary point,
	the change of variables satisfies
	\[
	v(\phi)=\frac1{r(\theta)},\quad
	\frac{\dd\phi}{\dd\theta}=\frac{hR}{r^2},\quad
	v+v''=\frac{r^3}{h^3R}.
	\]
	For \(g(\phi(\theta))=f(\theta)\), this gives
	\[
	v^{-q}\,\dd\phi=hRr^{q-2}\,\dd\theta,\quad
	\frac{g'^2v^{1-q}}{v+v''}\,\dd\phi
	=(f')^2h^2r^{q-2}\,\dd\theta,
	\]
	where the primes on \(v,g\) refer to \(\phi\), and those on
	\(h,f\) refer to \(\theta\).
	Since all functions are \(\pi\)-periodic,
	\[
	\frac{\int_{\T_\pi}gv^{-q}\,\dd\phi}
	{\int_{\T_\pi}v^{-q}\,\dd\phi}
	=\bar f.
	\]
	Applying \eqref{eq:quantitative-polar-gap} to \(g-\bar f\),
	we obtain
	\[
	\int_{\T_\pi}\frac{g'^2v^{1-q}}{v+v''}\,\dd\phi
	\geq
	(q-1+\kappa)\int_{\T_\pi}(g-\bar f)^2v^{-q}\,\dd\phi.
	\]
	Changing variables and multiplying by \(2\) yields
	\[
	\int_0^{2\pi}(f')^2h^2r^{q-2}\,\dd\theta
	\geq
	(q-1+\kappa)\int_0^{2\pi}(f-\bar f)^2hRr^{q-2}\,\dd\theta,
	\]
	which is equivalent to \eqref{eq:quantitative-support-gap}.
\end{remark}

With the local concavity and uniform rigidity estimates in hand, we now pass to arbitrary origin-symmetric convex bodies.
\begin{theorem}
	\label{thm:minkowski-interior}\label{thm:interior-equality}
	Let \(2<q<4\), \(K,L\in\mathcal K_e^2\), and \(0\leq\lambda\leq1\).
	Then
	\begin{equation}\label{eq:minkowski-interior}
		\Vq((1-\lambda)K+\lambda L)^{1/q}
		\geq
		(1-\lambda)\Vq(K)^{1/q}
		+\lambda\Vq(L)^{1/q}.
	\end{equation}
	If \(0<\lambda<1\), equality holds if and only if
	\(L=cK\) for some \(c>0\).
\end{theorem}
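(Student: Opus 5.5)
The plan is to prove the inequality first for smooth bodies by integrating local concavity, then pass to general bodies by approximation, and finally obtain rigidity from the uniform gap of Proposition~\ref{prop:quantitative-rigidity}.

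\textbf{Step 1: smooth bodies.} Assume first that $K,L$ are of class $C^\infty_+$. Set $h_t=(1-t)h_K+th_L$ and $\Phi(t)=\Vq(h_t)^{1/q}$. Each $h_t$ is again a $C^\infty_+$ support function. For fixed $t$, write $h_L-h_K=f_th_t$ with $f_t$ smooth and $\pi$-periodic. By \eqref{eq:p1-second-deficit}, $\Phi''(t)$ equals $-\frac1q\Vq^{1/q-1}$ times the left-hand deficit in \eqref{eq:support-poincare}. That deficit is nonnegative by Lemma~\ref{lem:hardy-cauchy} with $\alpha=q-2$, transported through the change of variables \eqref{eq:polar-coordinate-identities}--\eqref{eq:polar-integral-identities}. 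Hence $\Phi$ is concave on $[0,1]$, which gives \eqref{eq:minkowski-interior} for smooth bodies.

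\textbf{Step 2: general bodies.} For arbitrary $K,L\in\mathcal K_e^2$, smooth both by \eqref{eq:smoothing}. The combinations converge in the Hausdorff metric and $\Vq$ is continuous, so the inequality follows by passing to the limit.

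\textbf{Step 3: equality, quantitative form.} Suppose equality holds for some $0<\lambda<1$. Since $\Phi$ is concave, $\Phi$ is then affine on $[0,1]$. Choose $0<r<R$ with $rB^2\subset K,L\subset RB^2$; all bodies on the segment, and on the smoothed segments, have inradius and circumradius controlled by $r$ and $R$. This bounds $v=h_{K_t^\circ}$ between $m_0=R^{-1}$ and $M_0=r^{-1}$. Remark~\ref{rem:quantitative-support-gap} then gives a uniform $\kappa$ such that, for the smoothed segments $\Phi_\varepsilon$,
\[
-\Phi_\varepsilon''(t)\geq c\,\kappa\int_0^{2\pi}(f_t-\bar f_t)^2h_tR_tr_t^{q-2}\,\dd\theta
=c\,\kappa\int_0^{2\pi}\frac{(h_L^\varepsilon-\bar f_th_t)^2}{h_t}\cdot\frac{R_tr_t^{q-2}}{1}\,\dd\theta ,
\]
with $c>0$ uniform. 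Integrate this against the Green kernel $G(t,s)=\min(t,s)(1-\max(t,s))$. The identity
\[
\Phi_\varepsilon(\lambda)-(1-\lambda)\Phi_\varepsilon(0)-\lambda\Phi_\varepsilon(1)=\int_0^1G(\lambda,t)(-\Phi_\varepsilon''(t))\,\dd t
\]
has a left side tending to $0$ as $\varepsilon\to0$. Therefore the weighted variances, integrated in $t$, tend to $0$.

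\textbf{Step 4: passing to the limit (main obstacle).} The difficulty is to extract the conclusion in the limit, because $R_t\,\dd\theta=\dd S_{K_t}$ is only a measure for the limit bodies. Write the variance term as $\int (h_L-\bar f_th_t)^2 h_t^{-1}r_t^{q-2}\,\dd S_{K_t^\varepsilon}$. The factors $h_t$, $r_t$ are bounded above and below, and for a.e.\ $t$ the function $r_t$ converges at continuity points. By \eqref{eq:surface-area-linear}, the measures $S_{K^\varepsilon_t}$ converge weakly to $S_{K_t}$. The numbers $\bar f_t$ are bounded, so along a subsequence they converge to some $\beta_t$. Lower semicontinuity then gives, for a.e.\ $t$, that $h_L=\beta_th_{K_t}$ on the support of $S_{K_t}$.

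On each open arc of the complement of $\operatorname{supp}S_{K_t}$, the function $h_{K_t}$ solves $h+h''=0$. The relation $h_L=\beta_t((1-t)h_K+th_L)$ then forces $(1-\beta_t t)h_L=\beta_t(1-t)h_K$ on $\operatorname{supp}S_{K_t}$. Comparing with the arcs, $h_L-ch_K$ vanishes on $\operatorname{supp}S_{K_t}$ and satisfies the linear equation $h+h''=0$ on the complementary arcs. The arcs between support points have length less than $\pi$, since the measure is not supported in a half-circle, so uniqueness for this boundary value problem gives $h_L=ch_K$ everywhere. Thus $L=cK$ with $c>0$. Conversely, dilates give equality by homogeneity.
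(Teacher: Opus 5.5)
Your proposal is correct and follows essentially the same route as the paper. The steps match: concavity for $C^\infty_+$ bodies via the second variation and Lemma~\ref{lem:hardy-cauchy}, passage to general bodies by smoothing, and rigidity via the uniform gap of Proposition~\ref{prop:quantitative-rigidity}, the Green-kernel identity, weak convergence of surface area measures, and the equation $w+w''=0$ on complementary arcs. Your extraction of an a.e.\ $t$ by a subsequence, and your half-circle argument for arc lengths below $\pi$, are harmless variants of the paper's choice of $t_j\to t_*$ and its use of $h_*>0$.

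Two small points to fix, neither of which changes the conclusion. First, $(f_t-\bar f_t)^2h_t=(h_L-h_K-\bar f_th_t)^2/h_t$, so the limit relation is $h_L-h_K=\beta_th_{K_t}$ on $\operatorname{supp}S_{K_t}$. Second, say explicitly that $S_{K_t}=(1-t)S_K+tS_L$ with $0<t<1$ is what makes both $h_K$ and $h_L$ solve $h+h''=0$ on the complementary arcs.
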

\begin{proof}
	\textbf{Step 1.}
	Suppose first that \(K,L\) are of class \(C^\infty_+\), and set
	\[
	K_t=(1-t)K+tL,\qquad
	h_t=(1-t)h_K+th_L,
	\]
	\[
	G(t)=\Vq(K_t)^{1/q},
	\qquad 0\leq t\leq1.
	\]
	Since \(h_t+h_t''>0\), each \(K_t\) is also of class
	\(C^\infty_+\). Write
	\[
	R_t=h_t+h_t'',\qquad
	r_t=\sqrt{h_t^2+(h_t')^2},\qquad
	f_t=\frac{h_L-h_K}{h_t},
	\]
	and let
	\[
	\bar f_t=
	\frac{\int_0^{2\pi}f_t h_tR_t r_t^{q-2}\,\dd\theta}
	{\int_0^{2\pi}h_tR_t r_t^{q-2}\,\dd\theta}.
	\]
	All these functions of \(\theta\) are \(\pi\)-periodic.
	The second-variation formula
	\eqref{eq:p1-second-deficit} gives
	\begin{equation}\label{eq:interior-second-variation}
		\begin{aligned}
			-G''(t)
			=\frac1q\Vq(K_t)^{1/q-1}\Big(
			\int_0^{2\pi}(f_t')^2h_t^2r_t^{q-2}\,\dd\theta
			-(q-1)\int_0^{2\pi}
			(f_t-\bar f_t)^2h_tR_t r_t^{q-2}\,\dd\theta
			\Big).
		\end{aligned}
	\end{equation}
	Here primes on \(h_t,f_t\) denote derivatives in \(\theta\),
	while \(G''\) denotes the second derivative in \(t\).

	Proposition~\ref{lem:uniform-strict-gap} gives
	\eqref{eq:key-gap}. Lemma~\ref{lem:hardy-cauchy} then yields
	\eqref{eq:polar-poincare}, and the radial change of variables
	gives \eqref{eq:support-poincare}.
	Applying \eqref{eq:support-poincare} to \(K_t\) and \(f_t\)
	shows that the right-hand side of
	\eqref{eq:interior-second-variation} is nonnegative.
	Hence \(G''(t)\leq0\), so \(G\) is concave. Therefore
	\eqref{eq:minkowski-interior} holds whenever \(K,L\) are of
	class \(C^\infty_+\).

	We now let \(K,L\in\mathcal K_e^2\) be arbitrary, with no
	smoothness assumption. Choose a nonnegative smooth even
	approximate identity \(\eta_j\) on \(\T_\pi\), let
	\(\varepsilon_j\downarrow0\), and define
	\[
	h_{K_j}=h_K*\eta_j+\varepsilon_j,
	\qquad
	h_{L_j}=h_L*\eta_j+\varepsilon_j.
	\]
	Since \(h_K+h_K''\) and \(h_L+h_L''\) are nonnegative
	measures,
	\[
	h_{K_j}+h_{K_j}''
	=(h_K+h_K'')*\eta_j+\varepsilon_j>0,
	\]
	and similarly for \(L_j\). Thus \(h_{K_j}\) and \(h_{L_j}\)
	are the support functions of origin-symmetric
	\(C^\infty_+\) bodies \(K_j,L_j\). Moreover,
	\[
	h_{K_j}\longrightarrow h_K,\qquad
	h_{L_j}\longrightarrow h_L
	\quad\text{uniformly}.
	\]
	Hence \(K_j\to K\) and \(L_j\to L\) in the Hausdorff metric.
	Applying Step~1 to \(K_j,L_j\) and using the continuity of
	\(\Vq\) under Hausdorff convergence, we obtain
	\eqref{eq:minkowski-interior} for all
	\(K,L\in\mathcal K_e^2\).

	\textbf{Step 2.}
	Suppose equality holds in \eqref{eq:minkowski-interior} for
	some \(\lambda_0\in(0,1)\). Set
	\[
	K_t=(1-t)K+tL,\qquad G(t)=\Vq(K_t)^{1/q}.
	\]
	By Step~1, \(G\) is concave. Since it meets its endpoint chord
	at the interior point \(\lambda_0\), \(G\) must be affine on
	\([0,1]\).

	Using the approximating bodies from Step~1, define
	\[
	K_{t,j}=(1-t)K_j+tL_j,\quad
	h_{t,j}=(1-t)h_{K_j}+th_{L_j}, \quad G_j(t)=\Vq(K_{t,j})^{1/q}.
	\]
	Then \(h_{t,j}\to h_t=(1-t)h_K+th_L\) uniformly in
	\((t,\theta)\). Since \(K,L\in\mathcal K_e^2\), there exist
	constants \(0<r_0<R_0<\infty\), independent of \(j,t\), such
	that
	\begin{equation}\label{eq:interior-common-bounds}
		r_0B^2\subset K_j,K_{t,j}\subset R_0B^2,
		\qquad 0\leq t\leq1.
	\end{equation}
	The continuity of \(\Vq\) gives
	\begin{equation}\label{eq:interior-uniform-convergence}
		G_j\longrightarrow G
		\quad\text{uniformly on }[0,1].
	\end{equation}

	For the smooth segments \(K_{t,j}\), write
	\[
	R_{t,j}=h_{t,j}+h_{t,j}'',\quad
	r_{t,j}=\sqrt{h_{t,j}^2+(h_{t,j}')^2}, \quad f_{t,j}=\frac{h_{L_j}-h_{K_j}}{h_{t,j}},
	\]
	and let \(\bar f_{t,j}\) denote the
	\(h_{t,j}R_{t,j}r_{t,j}^{q-2}\)-weighted mean of \(f_{t,j}\),
	as in Step~1.
	The polar support functions satisfy
	\[
	R_0^{-1}\leq h_{K_{t,j}^\circ}\leq r_0^{-1}.
	\]
	Hence Proposition~\ref{prop:quantitative-rigidity}, in the
	form \eqref{eq:quantitative-support-gap}, applies with a
	constant \(\kappa>0\) independent of \(j,t\). Combining it
	with \eqref{eq:interior-second-variation}, we obtain
	\[
	-G_j''(t)\geq
	\frac{\kappa}{q}\Vq(K_{t,j})^{1/q-1}
	\int_0^{2\pi}(f_{t,j}-\bar f_{t,j})^2
	h_{t,j}R_{t,j}r_{t,j}^{q-2}\,\dd\theta.
	\]
	For smooth \(K_{t,j}\),
	\[
	\dd S_{K_{t,j}}=R_{t,j}\,\dd\theta.
	\]
	The bounds \eqref{eq:interior-common-bounds} give
	\(h_{t,j}r_{t,j}^{q-2}\geq r_0^{q-1}\) and a uniform upper
	bound for \(\Vq(K_{t,j})\). Since \(1/q-1<0\), there is a
	constant \(c_0>0\), independent of \(j,t\), such that
	\begin{equation}\label{eq:global-second-deficit}
		-G_j''(t)\geq
		c_0\int_{\mathbb S^1}
		(f_{t,j}-\bar f_{t,j})^2\,\dd S_{K_{t,j}}.
	\end{equation}

	Integration by parts on \([0,\lambda_0]\) and
	\([\lambda_0,1]\) yields
	\begin{equation}\label{eq:green-deficit}
		\begin{aligned}
			&G_j(\lambda_0)
			-(1-\lambda_0)G_j(0)-\lambda_0G_j(1)\\
			&\qquad
			=(1-\lambda_0)\int_0^{\lambda_0}
			t(-G_j''(t))\,\dd t
			+\lambda_0\int_{\lambda_0}^1
			(1-t)(-G_j''(t))\,\dd t.
		\end{aligned}
	\end{equation}
	By \eqref{eq:interior-uniform-convergence} and the affinity of
	\(G\), the left-hand side tends to zero.
	Fix \(0<a<b<1\). On \([a,b]\), the weight on the right-hand
	side of \eqref{eq:green-deficit} is bounded below by
	\[
	\min\{a(1-\lambda_0),\,\lambda_0(1-b)\}>0.
	\]
	Since \(-G_j''\geq0\), it follows from
	\eqref{eq:global-second-deficit} that
	\[
	\int_a^b\int_{\mathbb S^1}
	(f_{t,j}-\bar f_{t,j})^2
	\,\dd S_{K_{t,j}}\,\dd t
	\longrightarrow0.
	\]
	Thus we may choose \(t_j\in[a,b]\) such that
	\begin{equation}\label{eq:interior-vanishing-variance}
		\int_{\mathbb S^1}
		(f_{t_j,j}-\bar f_{t_j,j})^2
		\,\dd S_{K_{t_j,j}}
		\longrightarrow0.
	\end{equation}
	After passing to a subsequence,
	\(t_j\to t_*\in[a,b]\subset(0,1)\). Then
	\[
	K_{t_j,j}\longrightarrow K_{t_*},
	\qquad
	f_{t_j,j}\longrightarrow
	f:=\frac{h_L-h_K}{h_{K_{t_*}}}
	\quad\text{uniformly}.
	\]
	Since
	\[
	|\bar f_{t_j,j}|
	\leq \|f_{t_j,j}\|_\infty,
	\]
	we may pass to a further subsequence such that
	\(\bar f_{t_j,j}\to c\) for some \(c\in\mathbb R\).
	By the weak continuity of surface area measures,
	$
	S_{K_{t_j,j}}\rightharpoonup S_{K_{t_*}}.
	$
	Passing to the limit in
	\eqref{eq:interior-vanishing-variance} gives
	\[
	\int_{\mathbb S^1}(f-c)^2\,\dd S_{K_{t_*}}=0.
	\]
	Since \(f\) is continuous,
	\begin{equation}\label{eq:interior-constant-on-support}
		f=c
		\quad\text{on }\operatorname{supp}S_{K_{t_*}}.
	\end{equation}

	\textbf{Step 3.}
	Write
	\[
	h_*=h_{K_{t_*}},
	\qquad
	w=h_L-h_K-ch_*.
	\]
	By \eqref{eq:interior-constant-on-support},
	\[
	w=0
	\quad\text{on }\operatorname{supp}S_{K_{t_*}}.
	\]
	In the plane, the surface area measure is linear with respect to Minkowski addition, so
	\[
	S_{K_{t_*}}=(1-t_*)S_K+t_*S_L.
	\]
	Since \(0<t_*<1\), if \(I\) is a connected component of
	$
	\mathbb S^1\setminus\operatorname{supp}S_{K_{t_*}},
	$
	then both \(S_K\) and \(S_L\) vanish on \(I\). Hence
	\[
	w''+w=0,\qquad
	h_*''+h_*=0
	\quad\text{on }I
	\]
	in the sense of distributions, and therefore classically.

	The endpoints of \(I\) belong to
	\(\operatorname{supp}S_{K_{t_*}}\), so \(w\) vanishes at both
	endpoints. Moreover, \(h_*>0\) on \(\overline I\).
	A nonzero solution of \(h_*''+h_*=0\) is a translated
	multiple of \(\cos\); hence its strict positivity on
	\(\overline I\) implies \(|I|<\pi\).
	A solution of \(w''+w=0\) vanishing at two points less than
	\(\pi\) apart must be identically zero. Thus \(w=0\) on every
	complementary component and, together with
	\eqref{eq:interior-constant-on-support}, on the whole circle.
	Therefore
	\[
	h_L-h_K
	=c\bigl((1-t_*)h_K+t_*h_L\bigr),
	\]
	or equivalently,
	\[
	(1-ct_*)h_L
	=\bigl(1+c(1-t_*)\bigr)h_K.
	\]
	Neither coefficient can vanish, and since \(h_K,h_L>0\),
	their ratio is positive. Hence \(h_L\) is a positive constant
	multiple of \(h_K\), so
	$
	L=c_0K
	$
	for some \(c_0>0\).

	Conversely, if \(L=c_0K\) with \(c_0>0\), then
	\[
	(1-\lambda)K+\lambda L
	=(1-\lambda+\lambda c_0)K.
	\]
	Using the homogeneity
	\(\Vq(cK)=c^q\Vq(K)\), equality follows in
	\eqref{eq:minkowski-interior}.
\end{proof}

\vskip20pt
\section{The endpoint and sharpness}\label{sec:endpoint}
The strict harmonic comparison used above becomes critical at $q=4$.
We treat this endpoint by a moment-lifting argument and then recall the
rectangle counterexamples for $q>4$.

%\subsection{\texorpdfstring{The endpoint $q=4$}{The endpoint q = 4}}

The following result is the planar endpoint of
\cite[Theorem~1.5]{LiWanZhang}, based on Hadwiger's moment inequality
\cite{Hadwiger1956}. We give the proof directly for origin-symmetric
planar bodies, using disk fibers in a four-dimensional lift.
\begin{theorem}\label{thm:q4-equality}
	Let \(K,L\in\mathcal K_e^2\) and \(0\leq\lambda\leq1\). Then
	\begin{equation}\label{eq:q4-bm}
		\widetilde V_4\bigl((1-\lambda)K+\lambda L\bigr)^{1/4}
		\geq
		(1-\lambda)\widetilde V_4(K)^{1/4}
		+\lambda\widetilde V_4(L)^{1/4}.
	\end{equation}
	If \(0<\lambda<1\), equality holds if and only if \(L=cK\) for
	some constant \(c>0\).
\end{theorem}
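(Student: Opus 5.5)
The plan is to lift each $K\in\mathcal K_e^2$ to a convex body in $\R^4$ whose volume is a fixed multiple of $\widetilde V_4(K)$, and then to apply the classical Brunn--Minkowski inequality in $\R^4$ together with its equality case. Writing $\R^4=\R^2\times\R^2$, set
\[
 \widehat K=\{(x,y)\in\R^2\times\R^2:\ x\in K,\ |y|\leq \|x\|_{K^*}\}\cdots
\]
That version is wrong, since it does not produce the weight $|x|^2$. The correct lift uses disk fibers of radius $|x|$ over $x\in K$. The quantity we want is $\int_K |x|^{2}\,\dd x$, which equals $\pi^{-1}\int_K \pi|x|^2\,\dd x$, i.e.\ $\pi^{-1}$ times the volume of
\[
 \widehat K=\{(x,y): x\in K,\ |y|\leq |x|\}.
\]
This set is not convex. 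I would instead use a rotation-equivariant lift: identify $\R^2$ with $\mathbb C$ and define
\[
 \widehat K=\{(w_1,w_2)\in\mathbb C^2:\ w_1\bar w_2^{\,}\ \ldots\}.
\]
The cleanest version is the following. Consider the linear map $T_\omega:(x,y)\mapsto$ and define $\widehat K=\bigcup_{\omega\in\mathbb S^1}\{(x,y): x\cos\omega+ y\sin\omega \in K,\ \ldots\}$. Concretely, I take
\[
 \widehat K=\{(x,y)\in\R^2\times\R^2:\ \langle x,u\rangle\cos\beta+\ldots\}.
\]
Since that is getting unwieldy, I commit to the following standard construction. Let $\widehat K=\{(a,b)\in\R^2\times\R^2: a e_1+ b' \}$, where for $a=(a_1,a_2)$ and $b=(b_1,b_2)$ we define the matrix $M=\begin{pmatrix}a_1&-b_1\\ a_2&-b_2\end{pmatrix}$ and require $M\xi\in K$ for all $\xi\in B^2$.

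This set is the intersection over $\xi$ of linear preimages of $K$, so it is convex and origin symmetric. Moreover $\widehat{(1-\lambda)K+\lambda L}\supset(1-\lambda)\widehat K+\lambda\widehat L$, because the conditions are linear in $(a,b)$ and Minkowski addition is compatible with pointwise membership.

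It then remains to verify that $|\widehat K|_4=c\,\widetilde V_4(K)$ for a universal constant $c$. I would write $M\xi=\xi_1 a-\xi_2 b$ and use polar-type coordinates $(a,b)$, with Fubini over the image ellipse. This step is the main obstacle: the map $K\mapsto\widehat K$ must actually yield the $|x|^2$ weight, and I expect that the correct lift is rather the set of pairs $(a,b)$ with $\{\xi_1a+\xi_2b:|\xi|\le1\}\subset K$, whose volume should integrate to $\int_K|x|^2$ only after averaging. If that identity fails, I would fall back to the direct argument of \cite{Hadwiger1956,LiWanZhang}: by Theorem~\ref{thm:minkowski-interior} and continuity in $q$, letting $q\uparrow4$ gives \eqref{eq:q4-bm} for all $K,L$, since $\Vq$ is continuous in $q$ for fixed bodies.

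For the equality case, I would use the Brunn--Minkowski equality case in $\R^4$ (homothety of $\widehat K$ and $\widehat L$), together with equality in the inclusion. Homothety of the lifts forces $h_L=c\,h_K$, because $K$ is recovered from $\widehat K$ as its projection onto the $a$-coordinates at $b=0$. The converse direction follows from homogeneity \eqref{eq:vq-homogeneous}.
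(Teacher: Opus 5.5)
Your committed lift fails, and the fallback recovers only the inequality, not the equality case.

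\textbf{The lift does not produce \(\widetilde V_4\).} The set \(\widehat K=\{M\in\R^{2\times2}: MB^2\subset K\}\) is convex, and it satisfies \((1-\lambda)\widehat K+\lambda\widehat L\subset\widehat{(1-\lambda)K+\lambda L}\). However, it transforms as \(\widehat{AK}=A\widehat K\) for \(A\in GL(2)\), and left multiplication by \(A\) on \(\R^{2\times2}\cong\R^4\) has determinant \((\det A)^2\). So \(|\widehat K|_4\) is \(SL(2)\)-invariant, whereas \(\widetilde V_4\) is only rotation invariant. Concretely, for \(A=\operatorname{diag}(s,s^{-1})\), the volume \(|\widehat{AB^2}|_4=|\widehat{B^2}|_4\) does not depend on \(s\), while \(\widetilde V_4(AB^2)=\frac{\pi}{4}(s^2+s^{-2})\). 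Hence no identity \(|\widehat K|_4=c\,\widetilde V_4(K)\) can hold, and the Brunn--Minkowski equality argument you attach to this lift has nothing to stand on.

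\textbf{The fallback does not give rigidity.} Letting \(q\uparrow4\) in Theorem~\ref{thm:minkowski-interior}, with \(\widetilde V_q(A)\to\widetilde V_4(A)\) for fixed bodies, does prove \eqref{eq:q4-bm}. But a limit of strict inequalities is only non-strict. Nothing in Section~5 gives rigidity that survives \(q\uparrow4\): the harmonic comparison there uses \(\alpha=q-2<2\) and becomes critical at \(q=4\). So the characterization of equality for \(0<\lambda<1\) remains unproved.

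\textbf{The missing idea.} Use a fiber radius that is linear rather than \(|x|\). The convexity of \(|x|\) is exactly why your first lift \(\{|y|\le|x|\}\) was not convex. Fix \(u\in\mathbb S^1\) and the half-plane \(H_u^+=\{\langle x,u\rangle\ge0\}\). For a half-body \(E=A\cap H_u^+\), set
\[
\mathcal C_u(E)=\{(x,z)\in\R^2\times\R^2:\ x\in E,\ |z|\le\langle x,u\rangle\}.
\]
This set has the following properties:
\begin{enumerate}[label=(\alph*)]
\item It is convex.
\item It satisfies \((1-\tau)\mathcal C_u(K_0\cap H_u^+)+\tau\,\mathcal C_u(L_0\cap H_u^+)\subset\mathcal C_u(M\cap H_u^+)\) for \(M=(1-\tau)K_0+\tau L_0\).
\item By origin symmetry, its volume is \(\frac{\pi}{2}T_u(A)\), where \(T_u(A)=\int_A\langle x,u\rangle^2\,\dd x\).
\end{enumerate}
The four-dimensional Brunn--Minkowski inequality then gives a directional inequality for \(T_u^{1/4}\).

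Since sums of \(1/4\)-concave functionals need not be \(1/4\)-concave, two further steps are needed. First, normalize \(\widetilde V_4(K_0)=\widetilde V_4(L_0)=1\). Second, choose an orthonormal basis \(u_1,u_2\) with \(T_{u_i}(K_0)=T_{u_i}(L_0)\). Such a basis exists because the difference of the two inertia matrices is trace-free. Summing the two directional inequalities then yields \(\widetilde V_4(M)\ge1\).

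\textbf{Equality case.} If equality holds, both directional inequalities are equalities. The half-lifts are therefore homothetic, and equal volumes force them to be translates. Projecting onto the first two coordinates gives \(L_0\cap H_u^+=(K_0\cap H_u^+)+y\) with \(y\perp u\). Comparing the centered sections on \(u^\perp\) gives \(y=0\), so \(L_0=K_0\) by symmetry, and hence \(L\) is a positive dilate of \(K\).
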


\begin{proof}
	\textbf{Step 1.}
	The cases \(\lambda=0,1\) are immediate, so assume
	\(0<\lambda<1\). Write
	\[
	I(A)=\int_A |x|^2\,\dd x=\widetilde V_4(A),
	\qquad
	T_u(A)=\int_A\langle x,u\rangle^2\,\dd x,
	\quad u\in\mathbb S^1.
	\]
	Since \(I(cA)=c^4I(A)\) for \(c>0\), set
	\[
	a=I(K)^{1/4},\qquad b=I(L)^{1/4},\qquad
	K_0=a^{-1}K,\qquad L_0=b^{-1}L.
	\]
	Then \(I(K_0)=I(L_0)=1\). Define
	\[
	s=(1-\lambda)a+\lambda b,
	\qquad
	\tau=\frac{\lambda b}{s}.
	\]
	Then \(0<\tau<1\) and \(s(1-\tau)=(1-\lambda)a\), so for
	\[
	M=(1-\tau)K_0+\tau L_0,
	\]
	we have
	\[
	(1-\lambda)K+\lambda L=sM,
	\qquad
	I\bigl((1-\lambda)K+\lambda L\bigr)^{1/4}
	=sI(M)^{1/4}.
	\]
	Thus it suffices to prove \(I(M)\geq1\) and characterize equality.

	Define the inertia matrix
	\[
	Q_A=\int_A x\otimes x\,\dd x,
	\]
	so that
	\[
	\langle Q_Au,u\rangle=T_u(A),
	\qquad \operatorname{tr}Q_A=I(A).
	\]
	The symmetric matrix \(D=Q_{K_0}-Q_{L_0}\) has trace zero.
	If \(D\ne0\), choose orthonormal eigenvectors \(e_1,e_2\)
	with eigenvalues \(\gamma,-\gamma\), and set
	\[
	u_1=\frac{e_1+e_2}{\sqrt2},
	\qquad
	u_2=\frac{e_1-e_2}{\sqrt2}.
	\]
	Then \(u_1,u_2\) are orthonormal and
	\(\langle Du_i,u_i\rangle=(\gamma-\gamma)/2=0\).
	If \(D=0\), any orthonormal basis has this property. Hence
	\begin{equation}\label{eq:matched-directional-moments}
		T_{u_i}(K_0)=T_{u_i}(L_0)=:m_i>0,
		\qquad i=1,2.
	\end{equation}
	The moments are positive because the bodies have interior, and
	orthonormality gives
	\[
	|x|^2=\langle x,u_1\rangle^2+\langle x,u_2\rangle^2,
	\qquad m_1+m_2=I(K_0)=1.
	\]

	Fix \(u\in\mathbb S^1\), and write
	\[
	H_u^+=\{x\in\R^2:\langle x,u\rangle\geq0\}.
	\]
	For a compact convex set \(E\subset H_u^+\), define
	\[
	\mathcal C_u(E)
	=\bigl\{(x,z)\in\R^2\times\R^2:
	x\in E,\ |z|\leq\langle x,u\rangle\bigr\}.
	\]
	This set is compact. Its convexity follows from that of \(E\)
	and the estimate, for \((x_i,z_i)\in\mathcal C_u(E)\) and
	\(0\leq t\leq1\),
	\[
	\begin{aligned}
		|(1-t)z_1+tz_2|
		\leq(1-t)|z_1|+t|z_2|
		\leq\langle(1-t)x_1+tx_2,u\rangle.
	\end{aligned}
	\]

	For \(A\in\{K_0,L_0,M\}\), we have \(0\in\operatorname{int}A\).
	Thus \((\varepsilon u,0)\) is an interior point of
	\(\mathcal C_u(A\cap H_u^+)\) for sufficiently small
	\(\varepsilon>0\). These lifts are therefore four-dimensional
	convex bodies with nonempty interior.

	The disk fiber over \(x\in E\) has area
	\(\pi\langle x,u\rangle^2\), so Fubini's theorem gives
	\begin{align*}
		|\mathcal C_u(E)|_4
		=\pi\int_E\langle x,u\rangle^2\,\dd x,
	\end{align*}
	where \(|\cdot|_4\) denotes four-dimensional volume.
	Since \(\langle x,u\rangle^2\) is even, origin symmetry gives
	\begin{equation}\label{eq:q4-half-lift-volume}
		|\mathcal C_u(A\cap H_u^+)|_4=\frac{\pi}{2}T_u(A).
	\end{equation}

	The same triangle inequality, now with base points in
	\(K_0\cap H_u^+\) and \(L_0\cap H_u^+\), yields
	\begin{equation}\label{eq:q4-lift-inclusion}
		(1-\tau)\mathcal C_u(K_0\cap H_u^+)
		+\tau\mathcal C_u(L_0\cap H_u^+)
		\subset\mathcal C_u(M\cap H_u^+).
	\end{equation}

	Combining \eqref{eq:q4-half-lift-volume},
	\eqref{eq:q4-lift-inclusion}, and the four-dimensional
	Brunn--Minkowski inequality, we obtain
	\begin{align*}
		\bigl(\tfrac{\pi}{2}T_u(M)\bigr)^{1/4}
		&=|\mathcal C_u(M\cap H_u^+)|_4^{1/4}\\
		&\geq
		\bigl|(1-\tau)\mathcal C_u(K_0\cap H_u^+)
		+\tau\mathcal C_u(L_0\cap H_u^+)\bigr|_4^{1/4}\\
		&\geq
		(1-\tau)|\mathcal C_u(K_0\cap H_u^+)|_4^{1/4}
		+\tau|\mathcal C_u(L_0\cap H_u^+)|_4^{1/4}\\
		&=\bigl(\tfrac{\pi}{2}\bigr)^{1/4}
		\bigl[(1-\tau)T_u(K_0)^{1/4}
		+\tau T_u(L_0)^{1/4}\bigr].
	\end{align*}
	Thus
	\begin{equation}\label{eq:directional-moment-bm}
		T_u(M)^{1/4}
		\geq(1-\tau)T_u(K_0)^{1/4}
		+\tau T_u(L_0)^{1/4}.
	\end{equation}
	Taking \(u=u_i\) and using \eqref{eq:matched-directional-moments}
	gives \(T_{u_i}(M)\geq m_i\). Therefore
	\[
	I(M)=T_{u_1}(M)+T_{u_2}(M)
	\geq m_1+m_2=1.
	\]
	By the normalization above, this proves \eqref{eq:q4-bm}.

	\textbf{Step 2.}
	Suppose equality holds, so \(I(M)=1\). Since
	\[
	T_{u_i}(M)\geq m_i,\qquad i=1,2,
	\]
	and \(T_{u_1}(M)+T_{u_2}(M)=1=m_1+m_2\), we have
	\[
	T_{u_i}(M)=T_{u_i}(K_0)=T_{u_i}(L_0)=m_i,
	\qquad i=1,2.
	\]

	Fix \(u=u_1\). Then equality holds in the Brunn--Minkowski step
	leading to \eqref{eq:directional-moment-bm}. Hence, by the equality
	condition in the Brunn--Minkowski inequality,
	\[
	\mathcal C_u(L_0\cap H_u^+)
	=r\,\mathcal C_u(K_0\cap H_u^+)+w
	\]
	for some \(r>0\) and \(w\in\R^4\). Since the two lifts have the same
	positive volume, \(r=1\).

	Projecting onto the first two coordinates gives
	\begin{equation}\label{eq:q4-translated-halves}
		L_0\cap H_u^+=(K_0\cap H_u^+)+y
	\end{equation}
	for some \(y\in\R^2\). Since both half-bodies meet \(u^\perp\),
	\eqref{eq:q4-translated-halves} implies \(\langle y,u\rangle=0\).
	Thus
	\[
	L_0\cap u^\perp=(K_0\cap u^\perp)+y.
	\]
	Both sections are centered line segments, so \(y=0\). Hence
	\[
	L_0\cap H_u^+=K_0\cap H_u^+.
	\]
	By origin symmetry, \(L_0=K_0\), and therefore
	\[
	L=\frac ba K.
	\]

	Conversely, if \(L=cK\) with \(c>0\), then
	\[
	(1-\lambda)K+\lambda L=(1-\lambda+\lambda c)K,
	\]
	and homogeneity gives equality in \eqref{eq:q4-bm}.
\end{proof}

%\subsection{\texorpdfstring{Sharpness for $q>4$}{Sharpness for q > 4}}

The rectangle construction of \cite[Section~4.1]{LiWanZhang} shows that
the endpoint is sharp, even within the unconditional class. We include
the calculation in the present normalization.

\begin{proposition}\label{prop:q-greater-4}
	For every \(q>4\), there exist
	\(K,L\in\mathcal K_e^2\) such that
	\[
	\Vq\Big(\frac{K+L}{2}\Big)^{1/q}
	<
	\frac12\Vq(K)^{1/q}
	+\frac12\Vq(L)^{1/q}.
	\]
	In particular, \eqref{eq:main-bm} with \(p=1\) fails for every \(q>4\).
\end{proposition}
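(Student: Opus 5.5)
Take $K=[-1,1]^2$ and $L=[-\varepsilon,\varepsilon]\times[-1,1]$, or more conveniently a pair of thin rectangles in perpendicular directions: $K_t=[-1,1]\times[-t,t]$ and $L_t=[-t,t]\times[-1,1]$ with $t\downarrow0$. These are unconditional and exchanged by the reflection $(x,y)\mapsto(y,x)$, so $\Vq(K_t)=\Vq(L_t)$. The right-hand side of the midpoint inequality is therefore $\Vq(K_t)^{1/q}$. The Minkowski midpoint is the square $M_t=\tfrac{1+t}{2}[-1,1]^2$, and the claim reduces to showing
\[
 \Bigl(\tfrac{1+t}{2}\Bigr)^q\Vq([-1,1]^2)<\Vq(K_t)
\]
for some $t\in(0,1)$.

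For the asymptotics as $t\to0$, use $\Vq(A)=\int_A|z|^{q-2}\,\dd z$. For the thin rectangle, $|z|^{q-2}=|x|^{q-2}(1+O(t^2))$ uniformly on $K_t$, so
\[
 \Vq(K_t)=2t\int_{-1}^1|x|^{q-2}\,\dd x\,(1+o(1))=\frac{4t}{q-1}(1+o(1)),
\]
which is of order $t$. The left-hand side tends to $2^{-q}\Vq([-1,1]^2)>0$. Since $\Vq(K_t)\to0$, thin rectangles alone give the wrong inequality direction, so this choice is not yet a counterexample.

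The parameters must instead be balanced: the gain from the factor $2^{-q}$ has to beat the loss of the thin body. I would take $K=[-1,1]\times[-t,t]$ and $L=[-t,t]\times[-1,1]$ but compare in the regime $t$ near $1$, or, more cleanly, use general rectangles $K=[-A,A]\times[-1,1]$ and $L=[-1,1]\times[-A,A]$ with $A$ large. After scaling by $A^{-1}$, the midpoint is the square with side $(A+1)/A\to1$, while $K$ and $L$ become thin rectangles $[-1,1]\times[-1/A,1/A]$. That is again the same degenerate regime, so the comparison must be done at finite $A$.

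The real computation is therefore
\[
 F(A)=\Vq\Bigl(\bigl[-\tfrac{A+1}{2},\tfrac{A+1}{2}\bigr]^2\Bigr)-\Vq([-A,A]\times[-1,1])
\]
for $q>4$. The approach is to expand near $A=1$ to second order: write $A=1+s$ and Taylor expand both sides in $s$. The first-order terms cancel by symmetry. The second-order coefficient is a difference of explicit moment integrals of $|z|^{q-2}$ together with its derivatives along the boundary, and it should change sign exactly at $q=4$. This is consistent with Theorem \ref{thm:q4-equality} being sharp there and with the local concavity \eqref{eq:support-poincare} failing for $f$ of the form $\cos2\theta$ once $q-1$ exceeds the relevant eigenvalue $3$. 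Equivalently, one can test the second variation \eqref{eq:p1-second-deficit} at the square with a rectangle-type perturbation and show it is positive for $q>4$. The global strict counterexample then follows at small $s$ by the Taylor expansion.

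The main obstacle is evaluating this second-order coefficient explicitly and verifying its sign for all $q>4$. I would first try doing it for the disk replaced by the square using the support-function formula of Lemma \ref{lem:second-variation}. If that becomes messy, the fallback is the direct rectangle integrals as in \cite[Section~4.1]{LiWanZhang}.
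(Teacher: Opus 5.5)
Your setup is sound, and it is in fact the paper's construction. Divide your pair $[-A,A]\times[-1,1]$, $[-1,1]\times[-A,A]$ by $(A+1)/2$ and put $t=(A-1)/(A+1)$. This gives exactly $K_t=[-(1+t),1+t]\times[-(1-t),1-t]$ and its swap $L_t$, whose midpoint is the fixed square $Q=[-1,1]^2$. So $F(A)<0$ is equivalent to $\Phi(t):=\Vq(K_t)>\Vq(Q)=\Phi(0)$.

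The gap is that your proposal stops exactly where the proof begins. The whole content of the proposition is the sign of the second-order coefficient, and you only assert that it ``should change sign exactly at $q=4$''. Your supporting heuristics do not fill this in:
\begin{itemize}
\item Theorem~\ref{thm:q4-equality} only shows that $\Phi''(0)\leq0$ when $q=4$; it says nothing about $q>4$.
\item The $\cos2\theta$ test is miscalibrated. At the disk the relevant $\pi$-periodic eigenvalue is $4$, not $3$, so \eqref{eq:support-poincare} with $f=\cos2\theta$ fails only for $q>5$. Perturbations of the disk therefore cannot detect $4<q\leq5$.
\item Lemma~\ref{lem:second-variation} is proved for $C^\infty_+$ bodies. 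At the square, $R=h+h''$ consists of atoms at the four normals, where $h'$ jumps and $r$ is undefined. Each weight $r^{q-2}$ must then be replaced by the integral of $|x|^{q-2}$ along the corresponding edge, which is just the direct computation in disguise.
\end{itemize}

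The missing step is short once you use the normalized family. Since $K_{-t}=L_t$ is congruent to $K_t$, $\Phi$ is even and $\Phi'(0)=0$; no first-order bookkeeping is needed. Write $\Phi(t)=4\int_0^{1+t}\int_0^{1-t}(x^2+y^2)^{(q-2)/2}\,\dd y\,\dd x$ and differentiate twice in the limits of integration:
\[
\Phi''(0)=8\Bigl(\frac{q-2}{2}\int_{-1}^1(1+y^2)^{(q-4)/2}\,\dd y-2^{(q-2)/2}\Bigr).
\]
Now insert
\[
2^{(q-2)/2}=\int_{-1}^1\bigl(y(1+y^2)^{(q-4)/2}\bigr)'\,\dd y=\int_{-1}^1(1+y^2)^{(q-6)/2}\bigl(1+(q-3)y^2\bigr)\,\dd y .
\]
This turns $\Phi''(0)$ into $4(q-4)\int_{-1}^1(1-y^2)(1+y^2)^{(q-6)/2}\,\dd y$, which is positive for $q>4$ and vanishes at $q=4$. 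Hence $t=0$ is a strict local minimum of $\Phi$, and any small $t\neq0$ gives the strict counterexample. Without this computation, or an equivalent one, your proposal is a correct plan rather than a proof.
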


\begin{proof}
	Let
	\[
	Q=[-1,1]^2.
	\]
	For \(|t|<1\), consider the rectangles
	\[
	K_t=[-(1+t),1+t]\times[-(1-t),1-t],
	\]
	and
	\[
	L_t=[-(1-t),1-t]\times[-(1+t),1+t].
	\]
	They are obtained from each other by interchanging the coordinate axes, and
	\[
	\frac{K_t+L_t}{2}=Q,
	\qquad
	\Vq(K_t)=\Vq(L_t).
	\]

	We claim that, for every \(q>4\) and all sufficiently small \(t>0\),
	\begin{equation}\label{eq:rectangle-reverse}
		\Vq(K_t)>\Vq(Q).
	\end{equation}
	Indeed, set
	\[
	\Phi(t)=\Vq(K_t)
	=
	\int_{-(1+t)}^{1+t}
	\int_{-(1-t)}^{1-t}
	(x^2+y^2)^{(q-2)/2}\,\dd y\,\dd x.
	\]
	By symmetry, \(\Phi'(0)=0\). Differentiating with respect to the
	integration limits and then integrating by parts gives
	\begin{align*}
		\Phi''(0)
		&=
		8\Big(
		\frac{q-2}{2}
		\int_{-1}^{1}(1+y^2)^{(q-4)/2}\,\dd y
		-2^{(q-2)/2}
		\Big)\\
		&=
		4(q-4)\int_{-1}^{1}
		(1-y^2)(1+y^2)^{(q-6)/2}\,\dd y
		>0.
	\end{align*}
	Hence \(t=0\) is a strict local minimum of \(\Phi\), and
	\eqref{eq:rectangle-reverse} follows for all sufficiently small
	nonzero \(t\).

	On the other hand, \eqref{eq:main-bm} with \(p=1\) and
	\(\lambda=\frac12\) would give
	\[
	\Vq(Q)^{1/q}
	\geq
	\frac12\Vq(K_t)^{1/q}
	+\frac12\Vq(L_t)^{1/q}
	=
	\Vq(K_t)^{1/q},
	\]
	contradicting \eqref{eq:rectangle-reverse}.
\end{proof}

\begin{remark}
	The rectangles \(K_t,L_t\) above are unconditional. Thus both this construction and the \(L_0\) counterexamples in Theorem~\ref{thm:p0-case} lie in the unconditional class, although they detect the different thresholds \(q=4\) and \(q=2\), respectively. In contrast, the counterexamples for \(0<p<1\) and \(q>2\) are non-unconditional parallelograms.
\end{remark}

\begin{proof}[Proof of Theorem~\ref{thm:sharp-threshold}]
The logarithmic case, including sharpness and equality, is
Theorem~\ref{thm:p0-case}. For $0<p\leq1$ and $0<q\leq2$,
Proposition~\ref{prop:positive-low} gives the inequality and its equality
condition. When $0<p<1$ and $q>2$,
Theorem~\ref{thm:counterexample-q-greater-than-two} gives strict midpoint
counterexamples. For $p=1$, the remaining positive range and equality
cases follow from Theorems~\ref{thm:minkowski-interior}
and~\ref{thm:q4-equality}, while Proposition~\ref{prop:q-greater-4}
excludes every $q>4$. These alternatives exhaust the parameter range.
\end{proof}

\noindent\textbf{Declaration on the use of AI.}
The proof of the equality case in Theorem~\ref{thm:minkowski-interior}
was developed through iterative discussions between the author and
ChatGPT (GPT-6 Astra Pro). The author subsequently checked and revised
the argument and takes full responsibility for the mathematical content
and the final text.


\vskip 20pt
\begin{thebibliography}{99}
% Keep bibliography hyperlinks active while displaying their text in black.
\hypersetup{urlcolor=black}

\bibitem{BLYZ}
K. J. B\"or\"oczky, E. Lutwak, D. Yang, and G. Zhang,
\emph{\href{https://doi.org/10.1016/j.aim.2012.07.015}
{The log-Brunn--Minkowski inequality}},
Adv. Math. \textbf{231} (2012), no. 3--4, 1974--1997.

\bibitem{CTW}
X. Chen, S. Tang, and S. Wang,
\emph{$L_p$ Minkowski problem and Brunn--Minkowski inequality for dual quermassintegrals},
preprint, \href{https://arxiv.org/abs/2605.28262}{arXiv:2605.28262} (2026).

\bibitem{CTWweighted}
X. Chen, S. Tang, and S. Wang,
\emph{$L_p$ Brunn--Minkowski inequality for weighted dual quermassintegrals},
preprint, \href{https://arxiv.org/abs/2607.00567}{arXiv:2607.00567} (2026).

\bibitem{Firey1962}
W. J. Firey,
\emph{\href{https://doi.org/10.7146/math.scand.a-10510}{$p$-Means of convex bodies}},
Math. Scand. \textbf{10} (1962), 17--24.

\bibitem{Gardner}
R. J. Gardner,
\emph{\href{https://doi.org/10.1017/CBO9781107341029}{Geometric Tomography}},
2nd ed., Cambridge University Press, Cambridge, 2006.

\bibitem{Hadwiger1956}
H. Hadwiger,
\emph{\href{https://doi.org/10.1007/BF02564348}
{Konkave Eik\"orperfunktionale und h\"ohere Tr\"agheitsmomente}},
Comment. Math. Helv. \textbf{30} (1956), 285--296.

\bibitem{HI}
Y. Hu and M. N. Ivaki,
\emph{Weighted centro-affine Poincar\'e inequalities},
preprint, \href{https://arxiv.org/abs/2606.04774}{arXiv:2606.04774} (2026).

\bibitem{HLYZ}
Y. Huang, E. Lutwak, D. Yang, and G. Zhang,
\emph{\href{https://doi.org/10.1007/s11511-016-0140-6}
{Geometric measures in the dual Brunn--Minkowski theory and their associated Minkowski problems}},
Acta Math. \textbf{216} (2016), no. 2, 325--388.

\bibitem{HYZ}
Y. Huang, D. Yang, and G. Zhang,
\emph{\href{https://doi.org/10.1090/bull/1858}{Minkowski problems for geometric measures}},
Bull. Amer. Math. Soc. (N.S.) \textbf{62} (2025), no. 3, 359--425.

\bibitem{KM}
A. V. Kolesnikov and E. Milman,
\emph{\href{https://doi.org/10.1090/memo/1360}
{Local $L_p$-Brunn--Minkowski inequalities for $p<1$}},
Mem. Amer. Math. Soc. \textbf{277} (2022), no. 1360.

\bibitem{LiWanZhang}
H. Li, Y. Wan, and Y. Zhang,
\emph{On the Brunn--Minkowski inequality for $q$-th dual quermassintegrals with $q>n$},
preprint, \href{https://arxiv.org/abs/2608.03949}{arXiv:2608.03949} (2026).

\bibitem{Lutwak1975}
E. Lutwak,
\emph{\href{https://doi.org/10.2140/pjm.1975.58.531}{Dual mixed volumes}},
Pacific J. Math. \textbf{58} (1975), no. 2, 531--538.

\bibitem{SadovskyZhang}
S. Sadovsky and G. Zhang,
\emph{\href{https://doi.org/10.1016/j.aim.2025.110456}
{Brunn--Minkowski and reverse isoperimetric inequalities for dual quermassintegrals}},
Adv. Math. \textbf{480} (2025), Paper No. 110456.

\bibitem{Schneider}
R. Schneider,
\emph{\href{https://doi.org/10.1017/CBO9781139003858}
{Convex Bodies: The Brunn--Minkowski Theory}},
2nd expanded ed., Cambridge University Press, Cambridge, 2014.

\bibitem{XiZhang}
D. Xi and Z. Zhang,
\emph{\href{https://doi.org/10.1090/proc/15952}
{The $L_p$ Brunn--Minkowski inequalities for dual quermassintegrals}},
Proc. Amer. Math. Soc. \textbf{150} (2022), no. 7, 3075--3086.

\bibitem{XY}
G. Xiong and K. Yang,
\emph{The uniqueness and non-uniqueness of solutions to the even dual Minkowski problem},
preprint, \href{https://arxiv.org/abs/2608.01912}{arXiv:2608.01912} (2026).

\end{thebibliography}
\end{document}